\newtheorem{theorem}{Theorem}[section]
\newtheorem{corollary}{Corollary}[section]
\newtheorem{remark}{Remark}[section]
\begin{document}
\title[Biharmonic maps...]{Biharmonic maps between doubly warped product
manifolds}
\date{}
\author{Selcen Y\"{U}KSEL PERKTA\c{S}, Erol KILI\c{C}}
\address{Correspondence Address: Inonu University, Faculty of Arts and
Sciences, Department of Mathematics, 44280 Malatya/TURKEY}
\email{selcenyuksel@inonu.edu.tr, ekilic@inonu.edu.tr}
\thanks{Mathematics Subject Classification(2000): 58E20, 53C43}
\keywords{Harmonic Maps, Biharmonic Maps, Doubly Warped Product Manifolds}

\begin{abstract}
In this paper biharmonic maps between doubly warped product manifolds are
studied. We show that the inclusion maps of Riemannian manifolds $B$ and $F$
into the doubly warped product $_{f}B\times _{b}F$ can not be proper
biharmonic maps. Also we analyze the conditions for the biharmonicity of
projections $_{f}B\times_{b}F\rightarrow B$ and $_{f}B\times_{b}F\rightarrow
F$ , respectively. Some characterizations for non-harmonic biharmonic maps
are given by using product of harmonic maps and warping metric. Specially,
in the case of $f=1$, the results for warped product in \cite{Balmus-mont}
are obtained.
\end{abstract}

\maketitle

\section{\protect\bigskip Introduction}

The study of biharmonic maps between Riemannian manifolds, as a
generalization of harmonic maps, was suggested by J. Eells and J. H. Sampson
in \cite{Eells}. The energy of a smooth map $\varphi :(B,g_{B})\rightarrow
(F,g_{F})$ between two Riemannian manifolds is defined by $E(\varphi )=\frac{%
1}{2}\int_{D}|d\varphi |^{2}v_{g_{B}}$ and $\varphi $ is called harmonic if
it's a critical point of energy. From the first variation formula for the
energy, the Euler-Lagrange equation associated to the energy is given by $%
\tau (\varphi )=0$ where $\tau (\varphi )=trace$ $\nabla d\varphi $ is the
tension field of $\varphi $ (see also [2, 8, 11]).

The bienergy functional $E_{2}$ of a smooth map $\varphi
:(B,g_{B})\rightarrow (F,g_{F})$ is defined by integrating the square norm
of the tension field, $E_{2}(\varphi )=\frac{1}{2}\int_{D}|\tau (\varphi
)|^{2}v_{g_{B}}.$ The first variation formula for the bienergy, derived in
\cite{Jiang,Jiang2} shows that the Euler-Lagrange equation for $E_{2}$ is%
\begin{equation*}
\tau _{2}(\varphi )=-J_{\varphi }(\tau (\varphi ))=-\Delta \tau (\varphi
)-traceR^{F}(d\varphi ,\tau (\varphi ))d\varphi=0
\end{equation*}%
where $J^{\varphi }$ is formally the Jacobi operator of $\varphi .$ Since
any harmonic map is biharmonic, we are interested in non-harmonic biharmonic
maps which are called proper biharmonic, (see also [15,16]).

In \cite{Baird} P. Baird and D. Kamissoko constructed new examples of proper
biharmonic maps between Riemannian manifolds by firstly taking a harmonic
map $\varphi :B\rightarrow F$ which is automatically biharmonic and then
deforming the metric conformally on $B$ to render $\varphi $ biharmonic,
(see also [3]). In \cite{Balmus-mont} the authors studied the biharmonic
maps between warped product manifolds. In this paper biharmonicity of the
iclusion $i:F\rightarrow B\times _{b}F$ of a Riemannian manifold $F$ into
the warped product manifold $B\times _{b}F$ and of the projection from $%
B\times _{b}F$ into the first factor are investigated. Also the authors gave
in \cite{Balmus-mont} two new classes of proper biharmonic maps by using
product of harmonic maps and warping the metric in the domain or codomain.

In this paper, we analyze the behaviour of the biharmonic equation by taking
into account to doubly warped products. Warped products were first defined
by O'Neill and Bishop in 1969, see \cite{Bishop}. By using this concept they
constructed Riemannian manifolds with negative sectional curvature. Also in
\cite{O'Neill} O'Neill gave the curvature formulas of warped products in the
terms of curvatures of components of warped products and studied
Robertson-Walker, static, Schwarzchild and Kruskal space-times as warped
products. In general doubly warped products can be considered as a
generalization of singly warped products or simply warped products. A doubly
warped product manifold is a product manifold $B\times F$ of two Riemannian
manifolds $(B,g_{B})\ $\ and $(F,g_{F})$ endowed with the metric $%
g=f^{2}g_{B}\oplus b^{2}g_{F}$ \ where $b:B\rightarrow (0,\infty )$ and $%
f:F\rightarrow (0,\infty )$ are smooth functions. The canonical leaves $%
\{x_{0}\}\times F$ and $B\times \{y_{0}\}$ of a doubly warped product
manifold $_{f}B\times _{b}F$ are totally umbilic submanifolds, which
intersect perpendicularly \cite{Reckziegel}, (see also [7, 10, 18, 19]).
When $f=1$, $_{1}B\times _{b}F$ becomes a warped product manifold and in
this case the leaves $B\times \{y_{0}\}$ are totally geodesic.

This article organized as follow.

In the first and second sections we give some basic definitions on
biharmonic maps and doubly warped product manifolds, respectively. In the
case of warped products since $B\times \{y_{0}\}$ is totally geodesic so
biharmonic, the authors in \cite{Balmus-mont} investigated only the
biharmonicity of the inclusion of the Riemannian manifold $F$ into the
warped product $B\times _{b}F$. In section 3, by considering the situation
of doubly warped product as a generalization of warped products, we analyze
the conditions for both of the leaves $\{x_{0}\}\times F$ and $B\times
\{y_{0}\}$ to be biharmonic as a submanifold and we show that both of the
leaves $\{x_{0}\}\times F$ and $B\times \{y_{0}\}$ can not be proper
biharmonic as a submanifold of the doubly warped product manifold $%
_{f}B\times _{b}F$. The product of two harmonic maps is clearly harmonic. If
the metric in the domain or codomain is deformed conformally, then the
harmonicity is lost. Then it's possible to define proper biharmonic maps
using products of two harmonic maps. In the next section we find some
results on product maps to be proper biharmonic.

\section{Biharmonic maps between Riemannian manifolds}

\setcounter{equation}{0} \renewcommand{\theequation}{2.\arabic{equation}}

Let $(B,g_{B})$ and $(F,g_{F})$ be Riemann manifolds and $\varphi
:(B,g_{B})\rightarrow (F,g_{F})$ be a smooth map. The tension field of $%
\varphi $ is given by
\begin{eqnarray}
\tau (\varphi )=trace\nabla d\varphi,
\end{eqnarray}%
where $\nabla d\varphi $ is the second fundamental form of $\varphi $.

Biharmonic maps $\varphi :(B,g_{B})\rightarrow (F,g_{F})$ between Riemannian
manifolds are critical points of the bienergy functional
\begin{eqnarray}
E_{2}(\varphi )=\frac{1}{2}\int_{D}|\tau (\varphi )|^{2}v_{g_{B}},
\end{eqnarray}
for any compact domain $D\subset B$. Biharmonic maps are a natural
generalization of the well-known harmonic maps, the extremal points of the
energy functional defined by
\begin{eqnarray}
E(\varphi )=\frac{1}{2}\int_{D}|d\varphi |^{2}v_{g_{B}.}
\end{eqnarray}
The Euler-Lagrange equation for the energy is $\tau (\varphi )=0$.

The first variation formula of $E_{2}(\varphi )$ is
\begin{eqnarray}
\frac{\partial }{\partial t}E_{2}(\varphi _{t})|_{t=0}=-\int_{D}<J_{\varphi
}(\tau (\varphi )),w>v_{g_{B}},
\end{eqnarray}
where $w=\frac{\partial \varphi }{\partial t}|_{t=0}$ is the variational
vector field of the variation $\{\varphi _{t}\}$ of $\varphi $. The
Euler-Lagrange equation corresponding to $E_{2}(\varphi )$ is given by the
vanishing of the bitension field
\begin{eqnarray}
\tau _{2}(\varphi )=-J_{\varphi }(\tau (\varphi ))=-\Delta \tau (\varphi
)-traceR^{F}(d\varphi ,\tau (\varphi ))d\varphi,
\end{eqnarray}
where $J^{\varphi }$ is the Jacobi operator of $\varphi $. Here $\Delta $ is
the rough Laplacian on sections of the pull-back bundle $\varphi ^{-1}(TF)$
defined by, for an orthonormal frame field $\{B_{j}\}_{j=1}^{m}$ on $B$,
\begin{eqnarray}
\Delta v &=&-trace_{g_{b}}(\nabla ^{\varphi })^{2}v  \notag \\
&=&-\sum_{j=1}^{m}\{\nabla _{B_{j}}^{\varphi }\nabla _{B_{j}}^{\varphi
}v-\nabla _{\nabla _{B_{j}}^{B}B_{j}}^{\varphi }v\},\,\,\,v\in \Gamma
(\varphi ^{-1}(TF)),
\end{eqnarray}
with $\nabla ^{\varphi }$ is representing the connection in the pull-back
bundle $\varphi ^{-1}(TF)$ and $\nabla ^{B}$ is the Levi-Civita connection
on $M$ and $R^{F}$ is the curvature operator
\begin{eqnarray}
R^{F}(X,Y)Z=\nabla _{X}^{F}\nabla _{Y}^{F}Z-\nabla _{Y}^{F}\nabla
_{X}^{F}Z-\nabla _{\lbrack X,Y]}^{F}Z.
\end{eqnarray}
Clearly any harmonic map is biharmonic. We call the non-harmonic biharmonic
maps proper biharmonic maps .

\section{ Doubly warped product manifolds}

\setcounter{equation}{0} \renewcommand{\theequation}{3.\arabic{equation}}

Let $(B,g_{B})$ and $(F,g_{F})$ be Riemannian manifolds of dimensions $m$
and $n$, respectively and let $b:B\rightarrow (0,\infty )$ and $%
f:F\rightarrow (0,\infty )$ be smooth functions. As a generalization of the
warped product of two Riemannian manifolds, a doubly warped product of
Riemannian manifolds $(B,g_{B})$ and $(F,g_{F})$ with warping functions $b$
and $f$ is a product manifold $B\times F$ with metric tensor
\begin{eqnarray}
g=f^{2}g_{B}\oplus b^{2}g_{F},
\end{eqnarray}
given by
\begin{eqnarray}
g(X,Y)=(f\circ \sigma )^{2}g_{B}(d\pi (X),d\pi (Y))+(b\circ \pi
)^{2}g_{F}(d\sigma (X),d\sigma (Y)),
\end{eqnarray}
where $X,Y\in \Gamma (T(B\times F))$ and $\pi :B\times F\rightarrow B$ and $%
\sigma :B\times F\rightarrow F$ are the canonical projections. We denote the
doubly warped product of Riemannian manifolds $(B,g_{B})\ $\ and $(F,g_{F})$
by $_{f}B\times _{b}F$. If $f=1$ then $_{1}B\times _{b}F=B\times _{b}F$
becomes a warped product of Riemannian manifolds $B$ and $F$.

Let $(B,g_{B})$ and $(F,g_{F})$ be Riemannian manifolds with Levi-Civita
connections $\nabla ^{B}$ and $\nabla ^{F}$, respectively and let $\nabla $
and $\overline{\nabla }$ denote the Levi-civita connections of the product
manifold $B\times F$ and doubly warped product manifold $_{f}B\times _{b}F$,
respectively. The Levi-Civita connection of doubly warped product manifold $%
_{f}B\times _{b}F$ is defined by
\begin{eqnarray}
\overline{\nabla }_{X}Y &=&\nabla _{X}Y+\frac{1}{2b^{2}}%
X_{1}(b^{2})(0,Y_{2})+\frac{1}{2b^{2}}Y_{1}(b^{2})(0,X_{2})  \notag \\
&&+\frac{1}{2f^{2}}X_{2}(f^{2})(Y_{1},0)+\frac{1}{2f^{2}}%
Y_{2}(f^{2})(X_{1},0)  \notag \\
&&-\frac{1}{2}g_{B}(X_{1},Y_{1})({grad}\ f^{2},0)-\frac{1}{2}%
g_{F}(X_{2},Y_{2})(0,{grad}\ b^{2})  \label{eq:3.3}
\end{eqnarray}%
for any $X,Y\in \Gamma (T(B\times F))$, where $X=(X_{1},X_{2})$, $%
Y=(Y_{1},Y_{2})$, $X_{1},Y_{1}\in \Gamma (TB)$ and\ $X_{2},Y_{2}\in \Gamma
(TF)$.

If $R$ and $\overline{R}$ denote the curvature tensors of $B\times F$ and $%
_{f}B\times _{b}F$, respectively then we have the following relation:
\begin{eqnarray}
&&\bar{R}(X,Y)-R(X,Y)=  \notag \\
&&\frac{1}{2b^{2}}\{[((\nabla _{Y_{1}}^{B}\func{grad}b^{2}-\frac{1}{2b^{2}}%
Y_{1}(b^{2})\func{grad}b^{2},0)-\frac{1}{2f^{2}}(0,Y_{1}(b^{2})\func{grad}%
f^{2}))\wedge _{g}(0,X_{2})  \notag \\
&&-((\nabla _{X_{1}}^{B}\func{grad}b^{2}-\frac{1}{2b^{2}}X_{1}(b^{2})\func{%
grad}b^{2},0)-\frac{1}{2f^{2}}(0,X_{1}(b^{2})\func{grad}f^{2}))\wedge
_{g}(0,Y_{2})]  \notag \\
&&+\frac{1}{2b^{2}}|\func{grad}b^{2}|^{2}(0,X_{2})\wedge _{g}(0,Y_{2})\}
\notag \\
&&+\frac{1}{2f^{2}}\{[((0,\nabla _{Y_{2}}^{F}\func{grad}f^{2}-\frac{1}{2f^{2}%
}Y_{2}(f^{2})\func{grad}f^{2})-\frac{1}{2b^{2}}(Y_{2}(f^{2})\func{grad}%
b^{2},0)\wedge _{g}(X_{1},0)  \notag \\
&&-((0,\nabla _{X_{2}}^{F}\func{grad}f^{2}-\frac{1}{2f^{2}}X_{2}(f^{2})\func{%
grad}f^{2})-\frac{1}{2b^{2}}(X_{2}(f^{2})\func{grad}b^{2},0)\wedge
_{g}(Y_{1},0)]  \notag \\
&&{\text \ \ \ \ \ \ \ \ \ }\, \, \, \, \,+\frac{1}{2f^{2}}|\func{grad}%
f^{2}|^{2}(X_{1},0)\wedge _{g}(Y_{1},0)\},
\end{eqnarray}
where the wedge product $X\wedge _{g}Y$ denotes the linear map $Z\rightarrow
g(Y,Z)X-g(X,Z)Y$ for all $X,$ $Y,$ $Z\in \Gamma (T(B\times F))$ .

\section{Biharmonicity of the inclusion maps}

\setcounter{equation}{0} \renewcommand{\theequation}{4.\arabic{equation}}

Let $(_{f}B\times _{b}F,g)$ be a doubly warped product manifold. For $%
y_{0}\in F$, let us consider the inclusion map of $B$
\begin{eqnarray*}
i_{y_{0}}:(B,g_{B}) &\rightarrow &(_{f}B\times _{b}F,g) \\
x &\rightarrow &(x,y_{0})
\end{eqnarray*}%
at the point $y_{0}$ level in $_{f}B\times _{b}F$ and for $x_{0}\in B$ \ let
\begin{eqnarray*}
i_{x_{0}} :(F,g_{F})&\rightarrow& (_{f}B\times _{b}F,g) \\
y &\rightarrow &(x_{0},y)
\end{eqnarray*}%
be the inclusion map of $F$ at the point $x_{0}$ level in $_{f}B\times
_{b}F. $ In this section we obtain some non-existence results for the
biharmonicity of inclusion maps $i_{y_{0}}$ of $B$ and $i_{x_{0}}$ of $F.$

\begin{theorem}
The bitension field of the inclusion map $i_{y_{0}}:(B,g_{B})\rightarrow
(_{f}B\times _{b}F,g)$ is given by
\begin{eqnarray*}
\tau _{2}(i_{y_{0}}) &=&\{-\frac{m^{2}}{8b^{2}}|{grad}\ f^{2}|^{2}({grad}\
b^{2},0) \\
&&+\frac{m}{2}\Delta (\ln b)(0,{grad}\ f^{2})+\frac{m^{2}}{8}(0,{grad}(|{grad%
}\ f^{2}|^{2})\}|_{i_{y_{0}}}.
\end{eqnarray*}
\end{theorem}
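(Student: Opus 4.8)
The plan is to evaluate $\tau_{2}(i_{y_{0}})$ straight from (2.5), i.e.
\[
\tau_{2}(i_{y_{0}}) = -\Delta\tau(i_{y_{0}}) - \mathrm{trace}\,\bar{R}(di_{y_{0}},\tau(i_{y_{0}}))di_{y_{0}},
\]
where $\bar{R}$ is the curvature of $_{f}B\times_{b}F$ and $\bar{\nabla}$ induces the pull-back connection along $i_{y_{0}}$. I would fix a point $x\in B$, choose a local $g_{B}$-orthonormal frame $\{e_{i}\}_{i=1}^{m}$ that is geodesic at $x$ (so $\nabla^{B}_{e_{i}}e_{j}(x)=0$), and use that $di_{y_{0}}(e_{i})=(e_{i},0)$ is horizontal while $\tau(i_{y_{0}})$ will turn out to be vertical.

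First I would compute the tension field from $\nabla di_{y_{0}}(e_{i},e_{j})=\bar{\nabla}_{(e_{i},0)}(e_{j},0)-(\nabla^{B}_{e_{i}}e_{j},0)$. Feeding two horizontal arguments into (\ref{eq:3.3}) kills every mixed term except the one proportional to $g_{B}(e_{i},e_{j})\,\mathrm{grad}\,f^{2}$, so that tracing over the frame gives
\[
\tau(i_{y_{0}}) = -\frac{m}{2b^{2}}(0,\mathrm{grad}\,f^{2}),
\]
a purely vertical field carrying a single factor $m$; I abbreviate $W:=\mathrm{grad}\,f^{2}\in\Gamma(TF)$.

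Next I would treat the two pieces of $\tau_{2}$ separately. For the rough Laplacian $\Delta\tau(i_{y_{0}})=-\sum_{i}\{\bar{\nabla}_{(e_{i},0)}\bar{\nabla}_{(e_{i},0)}\tau-\bar{\nabla}_{(\nabla^{B}_{e_{i}}e_{i},0)}\tau\}$, I would differentiate $-\frac{m}{2b^{2}}(0,W)$ twice: applying (\ref{eq:3.3}) to a horizontal--vertical pair produces a horizontal term in $e_{i}$ weighted by $|W|^{2}=|\mathrm{grad}\,f^{2}|^{2}$ together with a vertical term in $W$ weighted by $e_{i}(b^{2})$, and the second differentiation then feeds in $\mathrm{grad}\,b^{2}$, the trace of $\mathrm{Hess}^{B}(b^{2})$ and $|\mathrm{grad}\,b^{2}|^{2}$. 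For the curvature piece I would use $-\mathrm{trace}\,\bar{R}(di_{y_{0}},\tau)di_{y_{0}}=\frac{m}{2b^{2}}\sum_{i}\bar{R}((e_{i},0),(0,W))(e_{i},0)$ and read the operator off the curvature identity (3.4): the product curvature $R$ has no mixed horizontal--vertical block and so drops out, while the surviving wedge terms, applied to $(e_{i},0)$ and summed over the frame, separate into a frame-independent vector $Q$ (whose summation over $i$ supplies a second factor $m$) and genuinely $i$-dependent pieces that sum to $\mathrm{grad}\,b^{2}$ and to traces of $\mathrm{Hess}^{B}(b^{2})$.

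The decisive simplifications are then the following. The frame-independent vector $Q$, multiplied by the prefactor $\frac{m}{2b^{2}}$, is exactly what promotes one $m$ to $m^{2}$: its horizontal part yields $-\frac{m^{2}}{8b^{2}}|\mathrm{grad}\,f^{2}|^{2}(\mathrm{grad}\,b^{2},0)$, and its vertical part, rewritten through the gradient-field identity $\nabla^{F}_{W}W=\tfrac{1}{2}\mathrm{grad}(|\mathrm{grad}\,f^{2}|^{2})$ (valid since $W$ is itself a gradient), yields $\frac{m^{2}}{8}(0,\mathrm{grad}(|\mathrm{grad}\,f^{2}|^{2}))$. The remaining contributions in $\mathrm{grad}\,b^{2}$, $\mathrm{Hess}^{B}(b^{2})$ and $|\mathrm{grad}\,b^{2}|^{2}$ coming from $-\Delta\tau$ and from the curvature term recombine, via $\mathrm{div}\,\mathrm{grad}(\ln b)=\frac{\mathrm{tr}\,\mathrm{Hess}(b^{2})}{2b^{2}}-\frac{|\mathrm{grad}\,b^{2}|^{2}}{2b^{4}}$, into the single term $\frac{m}{2}\Delta(\ln b)(0,\mathrm{grad}\,f^{2})$. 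I expect the genuine obstacle to be this last bookkeeping step: one must check that the several vertical terms proportional to $W$ itself --- those carrying a factor $1/f^{2}$ and arising independently from $\Delta\tau$ and from the curvature operator --- cancel precisely, so that only the three advertised terms survive. Carrying the horizontal/vertical decomposition and the warping factors cleanly through the second covariant derivative is where the computation is most error-prone.
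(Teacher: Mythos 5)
Your plan follows the paper's own proof essentially step for step: compute $\tau(i_{y_{0}})$ from the doubly warped connection, then evaluate $-\Delta\tau(i_{y_{0}})$ and the curvature trace separately via (3.3) and (3.4), and recombine using $\nabla^{F}_{W}W=\tfrac{1}{2}\,\mathrm{grad}\,|W|^{2}$ and the $\Delta(\ln b)$ identity; the cancellations you flag (the $1/f^{2}$-weighted vertical terms, the promotion of $m$ to $m^{2}$ from the frame trace) are exactly the ones that occur in the paper's computation. The only discrepancy is the factor $1/b^{2}$ in your tension field, which comes from taking $\mathrm{grad}\,f^{2}$ with respect to $g_{F}$ rather than, as the paper implicitly does, with respect to the doubly warped metric $g$ --- a harmless normalization difference that you would need to undo at the end to land on the formula exactly as stated.
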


\begin{proof}
Let $\{B_{j}\}_{j=1}^{m}$ be an orthonormal frame on $(B,g_{B})$. By using
the equation (2.1) we obtain the tension field of $i_{y_{0}}$
\begin{eqnarray*}
\tau (i_{y_{0}}) &=&trace_{g_{B}}\nabla di_{y_{0}} \\
&=&\sum_{j=1}^{m}\{\nabla _{B_{j}}di_{y_{0}}(B_{j})-di_{y_{0}}(\nabla
_{B_{j}}^{B}B_{j})\} \\
&=&\sum_{j=1}^{m}\{\overline{\nabla }_{(B_{j},0)}(B_{j},0)-(\nabla
_{B_{j}}^{B}B_{j},0)\} \\
&=&-\frac{m}{2}(0,{grad}\ f^{2})|_{i_{y_{0}}}
\end{eqnarray*}%
Here it's obvious from the expression of the tension field of $i_{y_{0}}$
that $\ i_{y_{0}}$ is harmonic if and only if $({grad}\
f^{2})|_{i_{y_{0}}}=0 $.\newline
Now to get the bitension field of $i_{y_{0}}:(B,g_{B})\rightarrow
(_{f}B\times _{b}F,g)$, firstly let us compute the rough Laplacian of the
tension field of $\tau (i_{y_{0}})$. We have
\begin{eqnarray*}
\nabla _{B_{j}}\tau (i_{y_{0}}) &=&-\frac{m}{2}\nabla _{B_{j}}(0,{grad}\
f^{2})|_{i_{y_{0}}} \\
&=&-\frac{m}{2}(\overline{\nabla }_{(B_{j},0)}(0,{grad}\ f^{2}))|_{i_{y_{0}}}
\\
&=&-m(\frac{1}{4f^{2}}|{grad}\ f^{2}|^{2}(B_{j},0)+\frac{1}{4b^{2}}%
(B_{j},0)(b^{2})(0,{grad}\ f^{2}))|_{i_{y_{0}}}.
\end{eqnarray*}%
Then
\begin{eqnarray}
\nabla _{B_{j}}\nabla _{B_{j}}\tau (i_{y_{0}}) &=&-\frac{m}{4}\{\frac{1}{%
f^{2}}|{grad}\ f^{2}|^{2}(\nabla _{B_{j}}^{B}B_{j},0)  \notag \\
&&+\frac{1}{2b^{2}f^{2}}(B_{j},0)(b^{2})|{grad}\ f^{2}|^{2}(B_{j},0)  \notag
\\
&&-\frac{1}{2}(\frac{1}{f^{2}}|{grad}\ f^{2}|^{2}+\frac{1}{b^{4}}%
((B_{j},0)(b^{2}))^{2})(0,{grad}\ f^{2}))\}|_{i_{y_{0}}}.
\end{eqnarray}%
Also
\begin{eqnarray}
\nabla _{\nabla _{B_{j}}^{B}B_{j}}\tau (i_{y_{0}}) &=&(-\frac{m}{4f^{2}}|{%
grad}\ f^{2}|^{2}(\nabla _{B_{j}}^{B}B_{j},0)  \notag \\
&&-\frac{m}{4b^{2}}(\nabla _{B_{j}}^{B}B_{j},0)(b^{2})(0,{grad}\
f^{2}))|_{i_{y_{0}}}
\end{eqnarray}%
From the equations (4.1) and (4.2) the rough Laplacian of $\tau (i_{y_{0}})$
is
\begin{eqnarray}
-\Delta \tau (i_{y_{0}}) &=&\frac{m}{4}\sum_{j=1}^{m}\{-\frac{1}{2b^{2}f^{2}}%
(B_{j},0)(b^{2})|{grad}\ f^{2}|^{2}(B_{j},0)  \notag \\
&&+(\frac{1}{2f^{2}}|{grad}\ f^{2}|^{2}-\frac{1}{2b^{4}}%
((B_{j},0)(b^{2}))^{2}  \notag \\
&&+\frac{1}{b^{2}}(\nabla _{B_{j}}^{B}B_{j},0)(b^{2}))(0,{grad}\
f^{2})\}|_{i_{y_{0}}}.
\end{eqnarray}%
On the other hand by using (3.4) it can be seen that
\begin{eqnarray*}
trace_{g_{b}}\overline{R}(di_{y_{0}},\tau (i_{y_{0}}))di_{y_{0}} &=&\{\frac{m%
}{4b^{2}}\sum_{j=1}^{m}(\nabla _{B_{j}}^{B}B_{j})(b^{2})(0,{grad}\ f^{2})
\notag \\
&&+\frac{m^{2}}{8f^{2}}|{grad}\ f^{2}|^{2}(0,{grad}\ f^{2})  \notag \\
&&-\frac{m}{2}\sum_{j=1}^{m}B_{j}(\frac{1}{2b^{2}}B_{j}(b^{2}))(0,{grad}\
f^{2})  \notag \\
&&-\frac{m}{8b^{2}f^{2}}\sum_{j=1}^{m}((B_{j},0)(b^{2})|{grad}\
f^{2}|^{2}(B_{j},0))  \notag \\
&&-\frac{m}{8b^{4}}\sum_{j=1}^{m}(((B_{j},0)(b^{2}))^{2}(0,{grad}\ f^{2}))
\notag \\
&&-\frac{m^{2}}{4}(0,\nabla _{{grad}\ f^{2}}^{F}{grad}\ f^{2})  \notag \\
&&+\frac{m^{2}}{8b^{2}}|{grad}\ f^{2}|^{2}({grad}\ b^{2},0)\}|_{i_{y_{0}}}.
\end{eqnarray*}%
Subtracting the last equation from (4.3) we obtain the bitension field of
the $i_{y_{0}}:(B,g_{B})\rightarrow (_{f}B\times _{b}F,g)$
\begin{eqnarray}
\tau _{2}(i_{y_{0}}) &=&\{-\frac{m^{2}}{8b^{2}}|{grad}\ f^{2}|^{2}({grad}\
b^{2},0)  \notag \\
&&+\frac{m}{2}\sum_{j=1}^{m}(B_{j}(\frac{1}{2b^{2}}B_{j}(b^{2}))(0,{grad}\
f^{2})+\frac{m^{2}}{8}(0,{grad}(|{grad}\ f^{2}|^{2})\}|_{i_{y_{0}}},
\label{eq:4.4}
\end{eqnarray}%
where $\{B_{j}\}_{j=1}^{m}$ is an orthonormal frame on $(B,g_{b})$.

We can assume the normal coordinates about the arbitrary point of $%
(B,g_{b}). $ Therefore the equation (\ref{eq:4.4}) has the form%
\begin{eqnarray*}
\tau _{2}(i_{y_{0}}) &=&\{-\frac{m^{2}}{8b^{2}}|{grad}\ f^{2}|^{2}({grad}\
b^{2},0) \\
&&+\frac{m}{2}\Delta (\ln b)(0,{grad}\ f^{2})+\frac{m^{2}}{8}(0,{grad}(|{grad%
}\ f^{2}|^{2})\}|_{i_{y_{0}}}.
\end{eqnarray*}%
This completes the proof.
\end{proof}

\begin{corollary}
The inclusion map $i_{y_{0}}:(B,g_{B})\rightarrow (_{f}B\times _{b}F,g),$ is
a proper biharmonic map if and only if $b$ is constant and $y_{0}$ is not a
critical point for $f^{2}$ but it is a critical point for $|\func{grad}%
f^{2}|^{2}$.
\end{corollary}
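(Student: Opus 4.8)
The plan is to read the conclusion off directly from the explicit bitension field established in the Theorem, so that essentially no new computation is required. Recall first that $i_{y_{0}}$ is harmonic exactly when $\tau(i_{y_{0}})=-\frac{m}{2}(0,{grad}\,f^{2})|_{i_{y_{0}}}=0$, i.e. when $({grad}\,f^{2})|_{y_{0}}=0$; hence $i_{y_{0}}$ fails to be harmonic precisely when $y_{0}$ is not a critical point of $f^{2}$. Since a proper biharmonic map is by definition one with $\tau_{2}(i_{y_{0}})=0$ and $\tau(i_{y_{0}})\neq 0$, I would show that, under the standing assumption $({grad}\,f^{2})|_{y_{0}}\neq 0$, the equation $\tau_{2}(i_{y_{0}})=0$ is equivalent to the conjunction ``$b$ constant and $y_{0}$ a critical point of $|{grad}\,f^{2}|^{2}$''.

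The key structural observation is that the three summands of $\tau_{2}(i_{y_{0}})$ split according to the orthogonal decomposition $T({}_{f}B\times_{b}F)=TB\oplus TF$. The first summand $-\frac{m^{2}}{8b^{2}}|{grad}\,f^{2}|^{2}({grad}\,b^{2},0)$ is tangent to the $B$-factor, while the remaining two, $\frac{m}{2}\Delta(\ln b)(0,{grad}\,f^{2})$ and $\frac{m^{2}}{8}(0,{grad}(|{grad}\,f^{2}|^{2}))$, are tangent to the $F$-factor. Because these factors are $g$-orthogonal, the vanishing of $\tau_{2}(i_{y_{0}})$ is equivalent to the separate vanishing of its $B$-part and its $F$-part.

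For the forward implication I would argue as follows. Suppose $i_{y_{0}}$ is proper biharmonic, so $\tau_{2}(i_{y_{0}})=0$ while $({grad}\,f^{2})|_{y_{0}}\neq 0$. The $B$-part yields $\frac{m^{2}}{8b^{2}}|{grad}\,f^{2}|^{2}\,{grad}\,b^{2}=0$ at every point of $B$. Here $b^{2}>0$, and the factor $|{grad}\,f^{2}|^{2}$ evaluated at $y_{0}$ is a strictly positive constant (this is exactly the non-harmonicity), so necessarily ${grad}\,b^{2}\equiv 0$ on all of $B$; that is, $b$ is constant. Substituting $b$ constant into the $F$-part annihilates the $\Delta(\ln b)$ term, leaving $\frac{m^{2}}{8}(0,{grad}(|{grad}\,f^{2}|^{2}))|_{y_{0}}=0$, which says precisely that $y_{0}$ is a critical point of $|{grad}\,f^{2}|^{2}$. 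The main, and really the only delicate, point here is the quantifier: the $B$-part must vanish as a section over the whole of $B$, and it is this global vanishing, rather than vanishing at a single point, that forces $b$ to be globally constant.

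The converse is a direct substitution. If $b$ is constant and $y_{0}$ is not a critical point of $f^{2}$ but is a critical point of $|{grad}\,f^{2}|^{2}$, then $\tau(i_{y_{0}})=-\frac{m}{2}(0,{grad}\,f^{2})|_{y_{0}}\neq 0$, so $i_{y_{0}}$ is non-harmonic; meanwhile in $\tau_{2}(i_{y_{0}})$ the first term vanishes since ${grad}\,b^{2}=0$, the second vanishes since $\Delta(\ln b)=0$, and the third vanishes since ${grad}(|{grad}\,f^{2}|^{2})|_{y_{0}}=0$. Hence $\tau_{2}(i_{y_{0}})=0$, and $i_{y_{0}}$ is proper biharmonic, completing the equivalence.
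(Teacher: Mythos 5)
Your proposal is correct and is exactly the argument the paper intends: the corollary is stated without proof as an immediate consequence of the theorem's formula for $\tau _{2}(i_{y_{0}})$, and your reading-off via the orthogonal splitting into the $B$-component and $F$-component, with the correct attention to the fact that the $B$-component must vanish at every point of $B$ (forcing $b$ globally constant, not just ${grad}\ b^{2}=0$ at one point), is the intended route. No gaps.
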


\begin{corollary}
Each and every inclusion map $i_{y}:(B,g_{b})\rightarrow (_{f}B\times
_{b}F,g),$ $y\in F,$ is a proper biharmonic map if and only if the function $%
b$ is constant and \ $\func{grad}f^{2}$ is a non-zero constant norm vector
field.
\end{corollary}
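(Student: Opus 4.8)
The plan is to reduce everything to a pointwise application of Corollary 4.1. Since the hypothesis now concerns every inclusion map $i_{y}$ simultaneously rather than a single level $y_{0}$, I would run the equivalence of Corollary 4.1 at an arbitrary $y\in F$ and then re-express the resulting pointwise conditions as global conditions on the warping functions $b$ and $f$. No new computation of the bitension field is needed, since Theorem 4.1 has already supplied the formula that Corollary 4.1 rests on.

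For the forward implication, suppose each $i_{y}$ is proper biharmonic. Applying Corollary 4.1 at every $y$ forces $b$ to be constant (a condition that does not depend on the chosen level), forces every $y$ to fail to be a critical point of $f^{2}$, and forces every $y$ to be a critical point of $|\func{grad}f^{2}|^{2}$. I would then translate these three statements into field-theoretic language: that no $y$ is critical for $f^{2}$ means $\func{grad}f^{2}$ vanishes nowhere on $F$, while the requirement that every $y$ be critical for $|\func{grad}f^{2}|^{2}$ means $\func{grad}(|\func{grad}f^{2}|^{2})\equiv 0$. On the connected manifold $F$ the latter is exactly the assertion that $|\func{grad}f^{2}|^{2}$ is constant, i.e. that $\func{grad}f^{2}$ has constant norm; together with the nowhere-vanishing property this produces a non-zero constant norm vector field, as claimed.

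For the converse, assume $b$ is constant and $\func{grad}f^{2}$ is a non-zero constant norm vector field. I would check the three hypotheses of Corollary 4.1 at an arbitrary $y\in F$: constancy of $b$ is immediate; the non-vanishing of $\func{grad}f^{2}$ says that $y$ is not a critical point of $f^{2}$; and the constancy of $|\func{grad}f^{2}|^{2}$ gives $\func{grad}(|\func{grad}f^{2}|^{2})=0$, so that $y$ is a critical point of $|\func{grad}f^{2}|^{2}$. Corollary 4.1 then yields that $i_{y}$ is proper biharmonic, and since $y$ was arbitrary, every inclusion map is proper biharmonic.

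The only genuine subtlety, rather than a true obstacle, lies in the passage between the pointwise ``critical point'' language of Corollary 4.1 and the global ``non-zero constant norm'' language of the present statement. This hinges on two elementary observations: that demanding a critical point of $|\func{grad}f^{2}|^{2}$ at every point is equivalent to the identical vanishing of its gradient, and that connectedness of $F$ upgrades $\func{grad}(|\func{grad}f^{2}|^{2})\equiv 0$ to genuine constancy of $|\func{grad}f^{2}|^{2}$. Once these are recorded, the corollary follows directly from quantifying Corollary 4.1 over all $y\in F$.
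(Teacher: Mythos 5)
Your proposal is correct and follows the same route the paper intends: the paper states this corollary without proof as the immediate globalization of Corollary 4.1 (itself read off from the bitension field in Theorem 4.1), which is precisely your argument of quantifying Corollary 4.1 over all $y\in F$ and translating ``no $y$ is critical for $f^{2}$'' and ``every $y$ is critical for $|\func{grad}f^{2}|^{2}$'' into the nowhere-vanishing and constant-norm conditions. Your explicit remark that connectedness of $F$ is needed to pass from $\func{grad}(|\func{grad}f^{2}|^{2})\equiv 0$ to genuine constancy is a point the paper leaves tacit, but it does not change the argument.
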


Notice that the constancy of $b$ reduces the doubly warped product manifold $%
_{f}B\times _{b}F$ to the warped product manifold. So we have

\begin{corollary}
Let $(_{f}B\times _{b}F,g)$ be a doubly warped product manifold with
non-constant warping functions $b$ and $f$. Then the inclusion map of the
manifold $(B,g_{B})$ into the doubly warped product manifold $(_{f}B\times
_{b}F,g)$ is never a proper biharmonic map.
\end{corollary}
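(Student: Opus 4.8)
The plan is to deduce the statement directly from the characterization already obtained in Corollary 4.1, so that essentially no new computation is required. Corollary 4.1 asserts that $i_{y_{0}}$ is proper biharmonic if and only if three conditions hold simultaneously, the first of which is that the warping function $b$ be constant. Since the present hypothesis stipulates that $b$ is non-constant, this first necessary condition already fails, and hence $i_{y_{0}}$ cannot be proper biharmonic for any choice of $y_{0}\in F$. This yields the conclusion at once.

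If one prefers an argument working straight from the bitension field in Theorem 4.1 rather than quoting Corollary 4.1, I would proceed as follows. Recall first from the proof of Theorem 4.1 that $\tau (i_{y_{0}})=-\tfrac{m}{2}(0,{grad}\ f^{2})|_{i_{y_{0}}}$, so that $i_{y_{0}}$ is non-harmonic precisely when $({grad}\ f^{2})|_{y_{0}}\neq 0$. Next I would inspect the three summands of $\tau _{2}(i_{y_{0}})$ displayed in the theorem: the first summand, $-\tfrac{m^{2}}{8b^{2}}|{grad}\ f^{2}|^{2}({grad}\ b^{2},0)$, is the only term lying in the $B$-factor, while the remaining two lie in the $F$-factor. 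Hence biharmonicity, $\tau _{2}(i_{y_{0}})\equiv 0$, forces the $B$-component to vanish on its own.

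The key step is then to observe that along the image $i_{y_{0}}(B)=B\times \{y_{0}\}$ the scalar $|{grad}\ f^{2}|^{2}$ is evaluated at the fixed point $y_{0}$ and is therefore constant in $x\in B$, whereas ${grad}\ b^{2}$ genuinely varies over $B$. If the map is proper, this constant is non-zero by the harmonicity criterion above, so vanishing of the $B$-component forces ${grad}\ b^{2}\equiv 0$ on $B$; that is, $b$ must be constant, contradicting the hypothesis. I expect no real obstacle: the substantive work has already been carried out in Theorem 4.1, and the only point requiring a moment's care is the splitting of the bitension field into its $B$- and $F$-components together with the remark that the $f$-dependent factor is frozen along the leaf.
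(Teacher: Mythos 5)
Your proposal is correct and follows the same route as the paper: the paper derives this corollary immediately from Corollary 4.1 (proper biharmonicity of $i_{y_{0}}$ forces $b$ to be constant, contradicting the hypothesis), which is exactly your first paragraph. Your supplementary direct argument from the bitension field of Theorem 4.1 is also sound, but it merely re-derives the relevant part of Corollary 4.1 and is not needed.
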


\begin{remark}
When $f=1$, the doubly warped product manifold $_{f}B\times _{b}F$ becomes a
warped product $B\times _{b}F$ . Since the inclusion map $%
i_{y_{0}}:(B,g_{B})\rightarrow (B\times _{b}F,g)$ of $B$ at the level $%
y_{0}\in F$ is always totally geodesic, then it's harmonic for any warping
function $b\in C^{\infty }(B).$ So in the case of warped product
biharmonicity of the inclusion map $i_{y_{0}}:(B,g_{B})\rightarrow (B\times
_{b}F,g)$ is trivial.
\end{remark}

Now let us consider the iclusion map $i_{x_{0}}:(F,g_{F})\rightarrow
(_{f}B\times _{b}F,g),$ $y\rightarrow (x_{0},y),$ of $(F,g_{F})$ into the
doubly warped product manifold $(_{f}B\times _{b}F,g)$ where $x_{0}\in B.$
We have,

\begin{theorem}
The bitension field of the inclusion map $i_{x_{0}}:(F,g_{F})\rightarrow
(_{f}B\times _{b}F,g)$ is given by
\begin{eqnarray*}
\tau _{2}(i_{x_{0}}) &=&\{\frac{n^{2}}{8}({grad}(|{grad}\ b^{2}|^{2}),0)+%
\frac{n}{2}\Delta (\ln b)({grad}\ b^{2},0) \\
&&-\frac{n^{2}}{8f^{2}}|\func{grad}b^{2}|^{2}(0,\func{grad}%
f^{2})\}|_{i_{x_{0}}}.
\end{eqnarray*}
\end{theorem}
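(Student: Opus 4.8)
The plan is to mirror the computation carried out for $\tau_2(i_{y_0})$ in Theorem 4.1, interchanging the roles of the base $B$ and fiber $F$, and of the warping functions $b$ and $f$. Concretely, I would fix an orthonormal frame $\{F_k\}_{k=1}^{n}$ on $(F,g_F)$ and first compute the tension field $\tau(i_{x_0})=\mathrm{trace}_{g_F}\nabla di_{x_0}=\sum_{k=1}^{n}\{\overline{\nabla}_{(0,F_k)}(0,F_k)-(0,\nabla^F_{F_k}F_k)\}$. Applying the Levi-Civita connection formula \eqref{eq:3.3} with $X=Y=(0,F_k)$, only the term $-\tfrac12 g_F(F_k,F_k)(0,\mathrm{grad}\,b^2)$ survives after summation, yielding $\tau(i_{x_0})=-\tfrac{n}{2}(\mathrm{grad}\,b^2,0)|_{i_{x_0}}$. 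This is the exact analogue of $\tau(i_{y_0})=-\tfrac{m}{2}(0,\mathrm{grad}\,f^2)$, with $b^2$ replacing $f^2$ and the component landing in the $B$-factor instead of the $F$-factor.

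Next I would compute the rough Laplacian $-\Delta\tau(i_{x_0})=\sum_k\{\nabla^{\varphi}_{F_k}\nabla^{\varphi}_{F_k}\tau(i_{x_0})-\nabla^{\varphi}_{\nabla^F_{F_k}F_k}\tau(i_{x_0})\}$, following equations (4.1)--(4.3) step by step. The first covariant derivative $\overline{\nabla}_{(0,F_k)}(\mathrm{grad}\,b^2,0)$ is evaluated from \eqref{eq:3.3}, producing a term proportional to $\tfrac{1}{b^2}|\mathrm{grad}\,b^2|^2(0,F_k)$ and a term proportional to $\tfrac{1}{f^2}(0,F_k)(f^2)(\mathrm{grad}\,b^2,0)$; the second derivative and the connection-correction term are then assembled exactly as before. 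In parallel I would compute the curvature term $\mathrm{trace}_{g_F}\overline{R}(di_{x_0},\tau(i_{x_0}))di_{x_0}$ using the curvature relation (3.4), again with the roles of the two factors swapped. Finally, subtracting the curvature contribution from the rough Laplacian and passing to normal coordinates on $(F,g_F)$ (so that $\sum_k F_k(\tfrac{1}{2f^2}F_k(f^2))=\Delta(\ln f)$ and the $\nabla^F_{F_k}F_k$ terms drop), I expect the stated three-term expression to emerge, with the Laplacian term involving $\Delta(\ln b)$ coming from the mixed $b,f$ cross-terms.

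The main obstacle will be the curvature term. In the analogue for $i_{y_0}$, the curvature relation (3.4) is highly asymmetric in $B$ and $F$, and the precise bookkeeping of which wedge-product pieces contribute nonzero traces is delicate: one must carefully track the $(0,X_2)\wedge_g(0,Y_2)$ and the mixed $(X_1,0)\wedge_g(Y_1,0)$ blocks, evaluate $g$ using the warped metric $g=f^2g_B\oplus b^2g_F$, and correctly normalize the orthonormal frame (note that $\{F_k\}$ orthonormal for $g_F$ becomes $\{\tfrac{1}{b}(0,F_k)\}$ orthonormal for $g$, which introduces $b$-factors in the trace). I anticipate that, as in the $i_{y_0}$ case, several curvature contributions will cancel against pieces of the rough Laplacian, and the surviving terms will combine into $\tfrac{n^2}{8}(\mathrm{grad}(|\mathrm{grad}\,b^2|^2),0)$ together with the curvature-induced term $-\tfrac{n^2}{8f^2}|\mathrm{grad}\,b^2|^2(0,\mathrm{grad}\,f^2)$. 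Keeping the two warping functions' derivatives straight throughout this subtraction is the step requiring the most care.
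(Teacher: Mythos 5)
Your plan reproduces the paper's own proof step for step: the same tension field $\tau(i_{x_0})=-\tfrac{n}{2}(\mathrm{grad}\,b^{2},0)|_{i_{x_0}}$, the same assembly of the rough Laplacian from $\nabla_{F_r}\nabla_{F_r}\tau(i_{x_0})$ and $\nabla_{\nabla^{F}_{F_r}F_r}\tau(i_{x_0})$, the same curvature trace computed from (3.4), and the same final passage to normal coordinates on $(F,g_F)$. One remark: your normal-coordinate simplification $\sum_{r}F_r(\tfrac{1}{2f^{2}}F_r(f^{2}))=\Delta(\ln f)$ is the correct one and matches the paper's own concluding display, so the coefficient $\Delta(\ln b)$ appearing in the theorem's statement is a typo for $\Delta(\ln f)$.
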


\begin{proof}
Let $\{F_{r}\}_{r=1}^{n}$ be an orthonormal frame on $(F,g_{f})$. One can
easily see from (2.1) the tension field of $i_{x_{0}}$ is%
\begin{eqnarray}
\tau (i_{x_{0}}) &=&trace_{g_{f}}\nabla di_{x_{0}}  \notag \\
&=&\sum_{r=1}^{n}\{\nabla _{F_{r}}di_{x_{0}}(F_{r})-di_{x_{0}}(\nabla
_{F_{r}}^{F}F_{r})\}  \notag \\
&=&\sum_{r=1}^{n}\{\overline{\nabla }_{(0,F_{r})}(0,F_{r})-(0,\nabla
_{F_{r}}^{F}F_{r})\}  \notag \\
&=&-\frac{n}{2}({grad}\ b^{2},0)|_{i_{x_{0}}}.
\end{eqnarray}%
Notice now that, from (4.5), $i_{x_{0}}$ is harmonic if and only if $({grad}%
\ b^{2})|_{x_{0}}=0$. In order to express the bitension field of $i_{x_{0}}$%
, we firstly compute the rough Laplacian. Since%
\begin{eqnarray*}
\nabla _{F_{r}}\tau (i_{x_{0}}) &=&-\frac{n}{2}\nabla _{F_{r}}({grad}\
b^{2},0)|_{i_{x_{0}}} \\
&=&-\frac{n}{2}(\overline{\nabla }_{(0,F_{r})}({grad}\ b^{2},0))|_{i_{x_{0}}}
\\
&=&-\frac{n}{4}\{\frac{1}{f^{2}}(0,F_{r})(f^{2})({grad}\ b^{2},0)+\frac{1}{%
b^{2}}|{grad}\ b^{2}|^{2}(0,F_{r})\}|_{i_{x_{0}}}
\end{eqnarray*}%
then
\begin{eqnarray}
\nabla _{F_{r}}\nabla _{F_{r}}\tau (i_{x_{0}}) &=&-\frac{n}{4}\{(\frac{1}{%
2f^{4}}((0,F_{r})(f^{2}))^{2}-\frac{1}{2b^{2}}|{grad}\ b^{2}|^{2})({grad}\
b^{2},0)  \notag \\
&&+\frac{1}{2b^{2}f^{2}}(0,F_{r})(f^{2})|{grad}\ b^{2}|^{2}(0,F_{r})  \notag
\\
&&+\frac{1}{b^{2}}|{grad}\ b^{2}|^{2}(0,\nabla
_{F_{r}}^{F}F_{r})\}|_{i_{x_{0}}}.
\end{eqnarray}%
Moreover we get%
\begin{eqnarray}
\nabla _{\nabla _{F_{r}}^{F}F_{r}}\tau (i_{x_{0}}) &=&-\frac{n}{4}\{\frac{1}{%
f^{2}}(0,\nabla _{F_{r}}^{F}F_{r})(f^{2})({grad}\ b^{2},0)  \notag \\
&&+\frac{1}{b^{2}}|{grad}\ b^{2}|^{2}(0,\nabla
_{F_{r}}^{F}F_{r})\}|_{i_{x_{0}}}.
\end{eqnarray}%
Therefore from (4.6) and (4.7)
\begin{eqnarray}
-\Delta \tau (i_{x_{0}}) &=&\frac{n}{4}\sum_{r=1}^{n}\{-\frac{1}{2f^{4}}%
((0,F_{r})(f^{2}))^{2}({grad}\ b^{2},0)  \notag \\
&&+\frac{1}{2b^{2}}|{grad}\ b^{2}|^{2}({grad}\ b^{2},0)  \notag \\
&&+\frac{1}{f^{2}}(0,\nabla _{F_{r}}^{F}F_{r})(f^{2})({grad}\ b^{2},0)
\notag \\
&&-\frac{1}{2b^{2}f^{2}}(0,F_{r})(f^{2})|{grad}\
b^{2}|^{2}(0,F_{r})\}|_{i_{x_{0}}}.
\end{eqnarray}%
Next by a straightforward computation we have
\begin{eqnarray}
trace_{g_{f}}\overline{R}(di_{x_{0}},\tau (i_{x_{0}}))di_{x_{0}} &=&\{\frac{n%
}{4f^{2}}\sum_{r=1}^{n}(\nabla _{F_{r}}^{F}F_{r})(f^{2})({grad}\ b^{2},0)
\notag \\
&&-\frac{n}{8f^{4}}\sum_{r=1}^{n}((0,F_{r})(f^{2}))^{2}({grad}\ b^{2},0)
\notag \\
&&+\frac{n^{2}}{8b^{2}}|\func{grad}b^{2}|^{2}({grad}\ b^{2},0)  \notag \\
&&-\frac{n}{2}\sum_{r=1}^{n}(F_{r}(\frac{1}{2f^{2}}F_{r}(f^{2}))({grad}\
b^{2},0)  \notag \\
&&-\frac{n}{8b^{2}f^{2}}(0,F_{r})(f^{2})|{grad}\ b^{2}|^{2}(0,F_{r})  \notag
\\
&&-\frac{n}{4}(\nabla _{{grad}\ b^{2}}^{B}{grad}\ b^{2},0)  \notag \\
&&+\frac{n^{2}}{8f^{2}}|{grad}\ b^{2}|^{2}(0,{grad}\ f^{2})\}|_{i_{x_{0}}}.
\end{eqnarray}%
So from (4.8) and (4.9) the bitension field of $i_{x_{0}}$ is
\begin{eqnarray}
\tau _{2}(i_{x_{0}}) &=&\{\frac{n^{2}}{8}({grad}(|{grad}\ b^{2}|^{2}),0)+%
\frac{n}{2}\sum_{r=1}^{n}F_{r}(\frac{1}{2f^{2}}F_{r}(f^{2}))({grad}\ b^{2},0)
\notag \\
&&-\frac{n^{2}}{8f^{2}}|\func{grad}b^{2}|^{2}(0,\func{grad}%
f^{2})\}|_{i_{x_{0}}}.
\end{eqnarray}

Then by using the normal coordinates about the each points of the manifold $%
(F,g_{F}),$ we get
\begin{eqnarray*}
\tau _{2}(i_{x_{0}}) &=&\{\frac{n^{2}}{8}({grad}(|{grad}\ b^{2}|^{2}),0)+%
\frac{n}{2}\Delta (\ln f)({grad}\ b^{2},0) \\
&&-\frac{n^{2}}{8f^{2}}|\func{grad}b^{2}|^{2}(0,\func{grad}%
f^{2})\}|_{i_{x_{0}}},
\end{eqnarray*}%
and we conclude.
\end{proof}

\begin{corollary}
The inclusion map $i_{x_{0}}:(F,g_{F})\rightarrow (_{f}B\times _{b}F,g)$ $,$
is a proper biharmonic map if and only if$\ x_{0}$ is not a critical point
for $b^{2}$ but it is a critical point for $|\func{grad}b^{2}|^{2}$ and the
warping function $f$ is constant.
\end{corollary}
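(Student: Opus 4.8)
The plan is to read everything off the two formulas already established, namely the bitension field of Theorem 4.2 and the tension field (4.5), since ``proper biharmonic'' means ``biharmonic but not harmonic,'' and once Theorem 4.2 is in hand both conditions become purely algebraic constraints on those expressions.

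First I would record the non-harmonicity condition. By (4.5), $\tau(i_{x_0})=-\frac{n}{2}(\func{grad}\,b^2,0)|_{i_{x_0}}$, so $i_{x_0}$ fails to be harmonic precisely when $\func{grad}\,b^2|_{x_0}\neq 0$, i.e. when $x_0$ is not a critical point of $b^2$ (equivalently of $b$, as $b>0$). In particular, in the proper case the scalar $|\func{grad}\,b^2|^2|_{x_0}$ is a \emph{nonzero} constant along the map.

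Next I would impose biharmonicity, $\tau_2(i_{x_0})=0$, and exploit the structure of the formula in Theorem 4.2, which splits into a $TB$-valued slot, $\frac{n^2}{8}(\func{grad}(|\func{grad}\,b^2|^2),0)+\frac{n}{2}\Delta(\ln f)(\func{grad}\,b^2,0)$, and a $TF$-valued slot, $-\frac{n^2}{8f^2}|\func{grad}\,b^2|^2(0,\func{grad}\,f^2)$. Because the two factors of $T({}_fB\times_b F)$ are $g$-orthogonal, the vanishing of $\tau_2(i_{x_0})$ is equivalent to the separate vanishing of each slot. The one point requiring care is the dependence on variables: $b$ and its derivatives are functions on $B$, hence frozen at $x_0$ and constant in $y$ along $i_{x_0}\colon y\mapsto(x_0,y)$, whereas $f$, $\func{grad}\,f^2$ and $\Delta(\ln f)$ are genuine functions of $y\in F$, and biharmonicity as a map demands vanishing for \emph{every} $y$, not merely at a single point.

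With this in place the argument is short. The $TF$-slot vanishing for all $y$, together with $|\func{grad}\,b^2|^2|_{x_0}\neq 0$ from the non-harmonicity step, forces $\func{grad}\,f^2\equiv 0$ on $F$, i.e. $f$ is constant; this is exactly the step where the global (all-$y$) requirement upgrades ``$\func{grad}\,f^2=0$'' to ``$f$ constant.'' Feeding $f$ constant back in annihilates the $\Delta(\ln f)$ term, so the $TB$-slot collapses to $\frac{n^2}{8}(\func{grad}(|\func{grad}\,b^2|^2),0)|_{x_0}$, whose vanishing says precisely that $x_0$ is a critical point of $|\func{grad}\,b^2|^2$. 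That establishes the forward implication; the converse follows by substituting $f$ constant and the two critical-point conditions directly into the formula of Theorem 4.2, which makes $\tau_2(i_{x_0})=0$ while (4.5) keeps $\tau(i_{x_0})\neq 0$. I do not anticipate a real obstacle here: the substantive content sits in Theorem 4.2, and what remains is the elementary observation that the two slots decouple and that the all-$y$ vanishing of the $F$-slot promotes a pointwise gradient condition to global constancy of $f$.
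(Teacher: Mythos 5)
Your proposal is correct and follows essentially the route the paper intends: the corollary is read off directly from the bitension field formula of Theorem 4.2 together with the tension field (4.5), using the $g$-orthogonality of the two factors to decouple the slots and the fact that the vanishing must hold for every $y\in F$ to upgrade $\func{grad}f^{2}=0$ to constancy of $f$. One small point in your favour: you work with the $\frac{n}{2}\Delta (\ln f)(\func{grad}b^{2},0)$ term that appears at the end of the paper's proof of Theorem 4.2, which is the correct one, rather than the $\Delta (\ln b)$ misprint in the theorem's displayed statement.
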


\begin{corollary}
Each and every inclusion map $i_{x}:(F,g_{F})\rightarrow (_{f}B\times
_{b}F,g),$ $x\in B,$ is a proper biharmonic map if and only if \ $\func{grad}%
b^{2}$ is a non-zero constant norm vector field and the function $f$ is
constant.
\end{corollary}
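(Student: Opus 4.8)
The plan is to globalize the pointwise characterization of the preceding corollary by requiring its hypotheses to hold simultaneously at every point $x\in B$. Recall that $i_x$ is proper biharmonic exactly when it is biharmonic and non-harmonic, so I would read off the non-harmonicity and the vanishing of $\tau_2(i_x)$ separately, using the tension field and the bitension field computed in the last theorem.

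First I would handle harmonicity. From $\tau(i_x)=-\frac{n}{2}({grad}\ b^2,0)|_{i_x}$, the map $i_x$ fails to be harmonic precisely when $({grad}\ b^2)(x)\neq 0$. Demanding that every $i_x$, $x\in B$, be non-harmonic therefore forces ${grad}\ b^2$ to be a nowhere-vanishing vector field on $B$.

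Next I would impose $\tau_2(i_x)=0$ at every point, splitting the bitension field of the last theorem into its $B$- and $F$-parts. The $F$-component is $-\frac{n^2}{8f^2}|{grad}\ b^2|^2(0,{grad}\ f^2)$; since $|{grad}\ b^2|^2$ is nowhere zero by the previous step, this part vanishes for all $x$ if and only if ${grad}\ f^2\equiv 0$, i.e. $f$ is constant. Once $f$ is constant, $\Delta(\ln f)=0$ and the bitension field reduces to its first term $\frac{n^2}{8}({grad}(|{grad}\ b^2|^2),0)$, whose vanishing at every point is equivalent to ${grad}(|{grad}\ b^2|^2)\equiv 0$ on $B$.

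The key step is to interpret this final identity: on a connected manifold a smooth function with identically vanishing gradient is constant, so $|{grad}\ b^2|^2$ is constant on $B$. Combined with the nowhere-vanishing property from the harmonicity step, this is exactly the assertion that ${grad}\ b^2$ has non-zero constant norm. For the converse I would substitute ``$f$ constant'' and ``$|{grad}\ b^2|^2$ a nonzero constant'' back into the bitension field and verify that all three terms vanish while $\tau(i_x)\neq 0$, so that each $i_x$ is indeed proper biharmonic. The only real subtlety is invoking connectedness of $B$ to pass from the pointwise critical-point condition on $|{grad}\ b^2|^2$ to genuine constancy; apart from that, the result is a direct quantification of the preceding pointwise corollary over all $x\in B$.
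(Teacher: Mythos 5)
Your argument is correct and follows essentially the same route as the paper: the corollary is just the pointwise characterization of the preceding corollary quantified over all $x\in B$, with non-harmonicity forcing $\func{grad}\,b^{2}$ to be nowhere zero, the $F$-component of $\tau_{2}(i_{x})$ forcing $f$ constant, and the remaining $B$-component forcing $\func{grad}(|\func{grad}\,b^{2}|^{2})\equiv 0$, hence constancy of the norm. Your explicit appeal to connectedness of $B$ to pass from a vanishing gradient to a constant is a detail the paper leaves implicit but is the standard (and correct) reading.
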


Notice that the constancy of $f$ reduces the doubly warped product manifold $%
_{f}B\times _{b}F$ to the warped product manifold. So we have

\begin{corollary}
Let $(_{f}B\times _{b}F,g)$ be a doubly warped product manifold with
non-constant warping functions $b$ and $f$. Then the inclusion map of the
manifold $(F,g_{F})$ into the doubly warped product manifold $(_{f}B\times
_{b}F,g)$ is never a proper biharmonic map.
\end{corollary}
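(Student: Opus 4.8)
The plan is to obtain this statement as an immediate consequence of the bitension-field formula of Theorem 4.2 (equivalently, of the characterization of proper biharmonicity of $i_{x_0}$ established just above): I will show that making $i_{x_0}$ proper biharmonic would force the warping function $f$ to be constant, contradicting the hypothesis.

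First I would record, from the tension-field computation (4.5), that $i_{x_0}$ is harmonic precisely when $({grad}\ b^{2})|_{x_0}=0$. Hence for $i_{x_0}$ to be \emph{proper} biharmonic it must in particular be non-harmonic, so $({grad}\ b^{2})|_{x_0}\neq 0$ and therefore $|{grad}\ b^{2}|^{2}>0$ at $x_0$.

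The key step is then to decompose the bitension field of Theorem 4.2 into its $TB$- and $TF$-components at a point $(x_0,y)$ and to impose biharmonicity $\tau_2(i_{x_0})=0$. Since $x_0\in B$ is fixed, the quantity $|{grad}\ b^{2}|^{2}$ is a fixed positive constant along the map, whereas ${grad}\ f^{2}$ is a vector field on the varying factor $F$. The $TF$-component of $\tau_2(i_{x_0})$ is the multiple of ${grad}\ f^{2}$ given by
$$-\frac{n^{2}}{8f^{2}}\,|{grad}\ b^{2}|^{2}\,{grad}\ f^{2};$$
requiring it to vanish at every $y\in F$, and using $f^{2}>0$ together with $|{grad}\ b^{2}|^{2}>0$, forces ${grad}\ f^{2}=0$ throughout $F$, that is, $f$ is constant. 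This contradicts the hypothesis that $f$ is non-constant, so $\tau_2(i_{x_0})$ cannot vanish while $i_{x_0}$ is non-harmonic; as $x_0\in B$ was arbitrary, no inclusion of $F$ into $_{f}B\times_{b}F$ is proper biharmonic.

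I do not anticipate any genuine obstacle here: the statement is a direct corollary of Theorem 4.2, exactly parallel to the corresponding result for the inclusion of $B$. The only point demanding care is the bookkeeping — keeping $b$ and ${grad}\ b^{2}$, which live on the fixed factor $B$ and are thus constant along $i_{x_0}$, cleanly separated from $f$ and ${grad}\ f^{2}$ on the moving factor $F$ — so that the vanishing of the $TF$-component isolates the condition ${grad}\ f^{2}=0$.
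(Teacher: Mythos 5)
Your proposal is correct and follows essentially the same route as the paper: the corollary is read off from the bitension-field formula of Theorem 4.2 (equivalently from the preceding corollary characterizing proper biharmonicity of $i_{x_{0}}$), by noting that non-harmonicity forces $|\mathrm{grad}\, b^{2}|^{2}(x_{0})>0$ and that the vanishing of the $TF$-component $-\frac{n^{2}}{8f^{2}}|\mathrm{grad}\, b^{2}|^{2}\,\mathrm{grad}\, f^{2}$ then forces $f$ to be constant, contradicting the hypothesis. Your separation of the quantities living on the fixed factor $B$ from those on the varying factor $F$ is exactly the bookkeeping the paper's argument relies on.
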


\begin{remark}
If $f=1$, $_{f}B\times _{b}F$ becomes a warped product manifold and
we obtain the corollaries 3.3, 3.4 and 3.5 in [4].
\end{remark}

\section{Product maps}

\setcounter{equation}{0} \renewcommand{\theequation}{5.\arabic{equation}}

Let $I_{B}:B\rightarrow B$ be the identity map on $B$ and $\varphi
:F\rightarrow F$ be a harmonic map. Obviuosly $\Psi =I_{B}\times \varphi
:B\times F\rightarrow B\times F$ is a harmonic map. Now suppose the product
manifold $B\times F$ (either as domain or codomain) with the doubly warped
product metric tensor $g=f^{2}g_{B}\oplus b^{2}g_{F}.$ In this case the
product map is no longer harmonic. Therefore in this section we shall obtain
some results for maps of product type to be proper biharmonic.

Firstly let us consider the product map%
\begin{equation*}
\overline{\Psi }=\overline{I_{B}\times \varphi }:\,_{f}B\times
_{b}F\rightarrow B\times F.
\end{equation*}

\begin{theorem}
Let $(B,g_{B})$ and $(F,g_{F})$ be Riemannian manifolds of dimensions $m$
and $n,$ respectively and let $b:B\rightarrow (0,\infty )$ and $%
f:F\rightarrow (0,\infty )$ be smooth functions. Suppose that $\varphi
:F\rightarrow F$ is a harmonic map. Then $\overline{\Psi }=\overline{%
I_{B}\times \varphi }:\,_{f}B\times _{b}F\rightarrow B\times F$ is a proper
biharmonic map if and only if $b$ is a non-constant solution of
\begin{eqnarray}
\,\,\,\,\,\,\,\,0=\frac{1}{f^{2}}trace_{g_{b}}\nabla ^{2}{grad}\ln b+\frac{1%
}{f^{2}}Ricc^{B}({grad}\ln b)+\frac{n}{2}{grad}(|{grad}\ln b|^{2})
\end{eqnarray}%
and $f$ is a non-constant solution of%
\begin{eqnarray}
0=-\frac{1}{b^{2}}J_{\varphi }(d\varphi ({grad}\ln f))+\frac{m}{2}{grad}%
(|d\varphi ({grad}\ln f)|^{2})
\end{eqnarray}
\end{theorem}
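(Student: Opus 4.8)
The map $\overline{\Psi} = \overline{I_B \times \varphi}$ sends the doubly warped product $_fB\times_b F$ to the ordinary product $B\times F$, so the target curvature operator $R^{B\times F}$ splits as $R^B \oplus R^F$, and the differential acts as $d\overline{\Psi}(X_1,X_2) = (X_1, d\varphi(X_2))$. The strategy is the same three-step computation carried out in the two theorems above: first compute the tension field $\tau(\overline{\Psi})$, then the rough Laplacian $\Delta\tau(\overline{\Psi})$, then the curvature term $\mathrm{trace}\,R^{B\times F}(d\overline{\Psi},\tau(\overline{\Psi}))d\overline{\Psi}$, and finally assemble $\tau_2 = -\Delta\tau - \mathrm{trace}\,R(\cdots)$. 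Because the domain metric is $g = f^2 g_B \oplus b^2 g_F$, I would work with the rescaled orthonormal frame $\{\tfrac{1}{f}B_j\}_{j=1}^m \cup \{\tfrac{1}{b}F_r\}_{r=1}^n$ and use the connection formula \eqref{eq:3.3} throughout.

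First I would establish the tension field. Since $\varphi$ is harmonic, $\tau(\varphi) = 0$, so the only contributions come from the mismatch between $\overline{\nabla}$ on the domain and the product connection on the target. Tracing $\nabla d\overline{\Psi}$ over the rescaled frame, the $B$-directions generate a term proportional to $\mathrm{grad}\,b$ in the $B$-factor and the $F$-directions generate a term proportional to $d\varphi(\mathrm{grad}\,f)$ in the $F$-factor; I expect $\tau(\overline{\Psi})$ to be a linear combination of $(\mathrm{grad}\ln b,\,0)$-type and $(0,\,d\varphi(\mathrm{grad}\ln f))$-type vectors, with the warping factors $f^2$, $b^2$ appearing as coefficients from the frame normalization. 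This immediately shows $\overline{\Psi}$ is harmonic iff both $b$ and $f$ are constant, confirming that any proper biharmonic case forces at least one warping function to be non-constant.

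Next I would compute the two ingredients of the bitension field separately, exploiting the product structure of the target to decouple the $B$- and $F$-components. For the rough Laplacian I would differentiate $\tau(\overline{\Psi})$ twice along the frame using \eqref{eq:3.3}, collecting a $\mathrm{trace}\,\nabla^2\,\mathrm{grad}\ln b$ term and a $\mathrm{grad}(|\mathrm{grad}\ln b|^2)$ term in the $B$-slot, and the Jacobi-operator expression $J_\varphi(d\varphi(\mathrm{grad}\ln f))$ together with $\mathrm{grad}(|d\varphi(\mathrm{grad}\ln f)|^2)$ in the $F$-slot. The curvature term is where the target factors $R^B$ and $R^F$ enter: contracting $R^B$ against the $B$-part of $\tau$ should reproduce the $\mathrm{Ricc}^B(\mathrm{grad}\ln b)$ term, and the $F$-curvature folds into the Jacobi operator $J_\varphi$ (recall $J_\varphi = \Delta^\varphi - \mathrm{trace}\,R^F(d\varphi,\cdot)d\varphi$). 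Because $B\times F$ is an unwarped product, the cross terms $R^B$-against-$F$ and $R^F$-against-$B$ vanish, so the two slots never interact — this is exactly why the final condition splits cleanly into the two independent equations (5.1) and (5.2). Setting $\tau_2(\overline{\Psi}) = 0$ and reading off the two slots, then normalizing by the overall factors $\tfrac{1}{f^2}$ and $\tfrac{1}{b^2}$, yields the stated system.

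The main obstacle will be the bookkeeping in the rough-Laplacian and curvature computations, specifically keeping track of the many terms involving derivatives of $b^2$ and $f^2$ against the rescaled frame and verifying that the numerous lower-order terms (the ones carrying factors like $\tfrac{1}{b^2}B_j(b^2)$) cancel between $-\Delta\tau$ and $-\mathrm{trace}\,R(\cdots)$, just as they did in Theorems 4.1 and 4.3. I would handle this by passing to normal coordinates at an arbitrary point of each factor (so that $\nabla^B_{B_j}B_j$ and $\nabla^F_{F_r}F_r$ vanish pointwise), which collapses the first-derivative-of-frame terms and lets the surviving expressions organize into the intrinsic operators $\mathrm{trace}\,\nabla^2\,\mathrm{grad}\ln b$, $\mathrm{Ricc}^B$, and $J_\varphi$. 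The biharmonicity-but-not-harmonicity requirement then forces $b$ and $f$ to be non-constant solutions of (5.1) and (5.2) respectively, completing the equivalence.
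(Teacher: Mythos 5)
Your plan follows exactly the computation the paper carries out: the rescaled frame $\{\tfrac{1}{f}(B_j,0),\tfrac{1}{b}(0,F_r)\}$, the tension field $\tau(\overline{\Psi})=n(\mathrm{grad}\ln b,0)+m(0,d\varphi(\mathrm{grad}\ln f))$, the rough Laplacian and the curvature term computed separately, with the unwarped product structure of the target decoupling the two slots into equations (5.1) and (5.2). This is essentially the same approach as the paper's proof, and it is correct.
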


\begin{proof}
Let $\{B_{j}\}_{j=1}^{m}$ and $\{F_{r}\}_{r=1}^{n}$ be local orthonormal
frames on $(B,g_{B})$ and $(F,g_{F}),$ respectively. Then $\{\frac{1}{f}%
(B_{j},0),\frac{1}{b}(0,F_{r})\}_{j,r=1}^{m,n}$ is an local orthonormal
frame on the doubly warped product manifold $_{f}B\times _{b}F.$ In order to
obtain bitension field of $\overline{\Psi }$, firstly we will compute the
tension field of $\overline{\Psi }.$ Since $\varphi $ is harmonic,%
\begin{eqnarray}
\tau (\overline{\Psi }) &=&trace_{g}\nabla d\overline{\Psi }  \notag \\
&=&\frac{1}{f^{2}}\sum_{j=1}^{m}\nabla d\overline{\Psi }%
((B_{j},0),(B_{j},0))+\frac{1}{b^{2}}\sum_{r=1}^{n}\nabla d\overline{\Psi }%
((0,F_{r}),(0,F_{r}))  \notag \\
&=&\frac{1}{f^{2}}\sum_{j=1}^{m}\{\nabla _{(B_{j},0)}^{\overline{\Psi }}d%
\overline{\Psi }(B_{j},0)-d\overline{\Psi }(\overline{\nabla }%
_{(B_{j},0)}(B_{j},0))\}  \notag \\
&&+\frac{1}{b^{2}}\sum_{r=1}^{n}\{\nabla _{(0,F_{r})}^{\overline{\Psi }}d%
\overline{\Psi }(0,F_{r})-d\overline{\Psi }(\overline{\nabla }%
_{(0,F_{r})}(0,F_{r}))\}  \notag \\
&=&\frac{1}{f^{2}}\sum_{j=1}^{m}\{\nabla _{(B_{j},0)}(B_{j},0)-d\overline{%
\Psi }(\overline{\nabla }_{(B_{j},0)}(B_{j},0))\}  \notag \\
&&+\frac{1}{b^{2}}\sum_{r=1}^{n}\{\nabla _{d\overline{\Psi }(0,F_{r})}d%
\overline{\Psi }(0,F_{r})-d\overline{\Psi }(\overline{\nabla }%
_{(0,F_{r})}(0,F_{r}))\}  \notag \\
&=&n({grad}\ln b,0)+m(0,d\varphi ({grad}\ln f)).
\end{eqnarray}
By a straightforward calculation we have%
\begin{eqnarray}
-\Delta \tau (\overline{\Psi }) &=&trace_{g}\nabla ^{2}\tau (\overline{\Psi }%
)  \notag \\
&=&\sum_{j=1}^{m}\{\nabla _{\frac{1}{f}(B_{j},0)}^{\overline{\Psi }}\nabla _{%
\frac{1}{f}(B_{j},0)}^{\overline{\Psi }}\tau (\overline{\Psi })-\nabla _{%
\overline{\nabla }_{\frac{1}{f}(B_{j},0)}\frac{1}{f}(B_{j},0)}^{\overline{%
\Psi }}\tau (\overline{\Psi })\}  \notag \\
&&+\sum_{r=1}^{n}\{\nabla _{\frac{1}{b}(0,F_{r})}^{\overline{\Psi }}\nabla _{%
\frac{1}{b}(0,F_{r})}^{\overline{\Psi }}\tau (\overline{\Psi })-\nabla _{%
\overline{\nabla }_{\frac{1}{b}(0,F_{r})}\frac{1}{b}(0,F_{r})}^{\overline{%
\Psi }}\tau (\overline{\Psi })\}  \notag \\
&=&(\frac{n}{f^{2}}trace_{g_{b}}\nabla ^{2}{grad}\ln b+n^{2}\nabla _{{grad}%
\ln b}{grad}\ln b,0)  \notag \\
&&+(0,\frac{m}{b^{2}}trace_{g_{f}}\nabla ^{2}(d\varphi ({grad}\ln
f))+m^{2}\nabla _{d\varphi ({grad}\ln f)}d\varphi ({grad}\ln f)).
\end{eqnarray}%
Also by using the usual definition of curvature tensor field on $B\times F,$
one can easily see that%
\begin{eqnarray}
trace_{g}R(d\overline{\Psi },\tau (\overline{\Psi }))d\overline{\Psi } &=&-%
\frac{n}{f^{2}}(Ricc^{B}({grad}\ln b),0)  \notag \\
&&+\frac{m}{b^{2}}(0,trace_{g_{f}}R^{F}(d\varphi ,d\varphi ({grad}\ln
f))d\varphi ).
\end{eqnarray}%
Finally bitension field of $\overline{\Psi }$ is
\begin{eqnarray}
\tau _{2}(\overline{\Psi }) &=&n(\frac{1}{f^{2}}trace_{g_{b}}\nabla ^{2}{grad%
}\ln b+\frac{1}{f^{2}}Ricc^{B}({grad}\ln b)+\frac{n}{2}{grad}(|{grad}\ln
b|^{2}),0)  \notag \\
&&+m(0,\frac{1}{b^{2}}trace_{g_{f}}\nabla ^{2}(d\varphi ({grad}\ln f)))
\notag \\
&&-m(0,\frac{1}{b^{2}}trace_{g_{f}}R^{F}(d\varphi ,d\varphi ({grad}\ln
f))d\varphi )  \notag \\
&&+m^{2}(0,\frac{1}{2}{grad}(|d\varphi ({grad}\ln f)|^{2}))
\end{eqnarray}%
and we conclude.
\end{proof}

\bigskip

We shall now investigate the biharmonicity of the projection $\bar{\pi}%
:_{f}B\times _{b}F\rightarrow B$ onto the first factor. By a straightforward
calculation, we get $\tau (\bar{\pi})=n({grad}\ln b)\circ \bar{\pi},$ and
the bitension field of $\bar{\pi}$ is%
\begin{eqnarray}
\tau _{2}(\bar{\pi}) &=&\frac{n}{f^{2}}trace_{g_{b}}\nabla ^{2}{grad}\ln b
\notag \\
&&+\frac{n}{f^{2}}Ricc^{B}({grad}\ln b)+\frac{n^{2}}{2}{grad}(|{grad}\ln
b|^{2}).
\end{eqnarray}%
By using the bitension field of the projection $\bar{\pi}$, the biharmonic
equation of the product map $\overline{\Psi }=\overline{I_{B}\times \varphi }%
:\,_{f}B\times _{b}F\rightarrow B\times F$ has the expression
\begin{equation}
\tau _{2}(\overline{\Psi })=(\tau _{2}(\bar{\pi}),\Lambda (d\varphi ({grad}%
\ln f))=0,
\end{equation}%
where
\begin{equation}
\ \Lambda (d\varphi ({grad}\ln f))=-\frac{m}{b^{2}}J_{\varphi }(d\varphi ({%
grad}\ln f)+\frac{m^{2}}{2}{grad}(|d\varphi ({grad}\ln f)|^{2}).
\end{equation}%
So we have

\begin{corollary}
The product map $\overline{\Psi }=\overline{I_{B}\times \varphi }%
:\,_{f}B\times _{b}F\rightarrow B\times F$ is a proper biharmonic map if and
only if $\ \Lambda (d\varphi ({grad}\ln f))=0$ and the projection $\bar{\pi}$
is a proper biharmonic map.
\end{corollary}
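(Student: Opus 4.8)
The plan is to exploit the product structure of the target $B\times F$, which forces the bitension field $\tau_{2}(\overline{\Psi})$ to split into a part tangent to $B$ and a part tangent to $F$; this splitting is precisely the content of equation (5.8). First I would take the expression for $\tau_{2}(\overline{\Psi})$ from Theorem 5.1 and read off its $B$-component,
\[
n\left(\frac{1}{f^{2}}trace_{g_{b}}\nabla^{2}{grad}\ln b+\frac{1}{f^{2}}Ricc^{B}({grad}\ln b)+\frac{n}{2}{grad}(|{grad}\ln b|^{2})\right),
\]
and compare it term by term with the bitension field (5.7) of the projection $\bar{\pi}$. Distributing the factor $n$ one sees that the two agree exactly, so the $B$-slot of $\tau_{2}(\overline{\Psi})$ is nothing but $\tau_{2}(\bar{\pi})$.

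Next I would identify the $F$-slot with $\Lambda(d\varphi({grad}\ln f))$. Abbreviating $v=d\varphi({grad}\ln f)$, the $F$-part of $\tau_{2}(\overline{\Psi})$ consists of the rough-Laplacian term $\frac{m}{b^{2}}trace_{g_{f}}\nabla^{2}v$, the curvature term $-\frac{m}{b^{2}}trace_{g_{f}}R^{F}(d\varphi,v)d\varphi$, and the gradient term $\frac{m^{2}}{2}{grad}(|v|^{2})$. Invoking $trace_{g_{f}}\nabla^{2}v=-\Delta v$ from (2.6) and the definition of the bitension field, the first two terms collapse to $-\frac{m}{b^{2}}J_{\varphi}(v)$, whence the $F$-slot equals $-\frac{m}{b^{2}}J_{\varphi}(v)+\frac{m^{2}}{2}{grad}(|v|^{2})=\Lambda(v)$, which is exactly formula (5.9). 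Together with the previous paragraph this establishes the decomposition $\tau_{2}(\overline{\Psi})=(\tau_{2}(\bar{\pi}),\Lambda(d\varphi({grad}\ln f)))$.

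Since $B\times F$ carries the orthogonal product metric, the $B$- and $F$-slots are linearly independent, so $\tau_{2}(\overline{\Psi})=0$ holds if and only if $\tau_{2}(\bar{\pi})=0$ and $\Lambda(d\varphi({grad}\ln f))=0$ hold simultaneously. Equivalently, $\overline{\Psi}$ is biharmonic precisely when $\bar{\pi}$ is biharmonic and $\Lambda(d\varphi({grad}\ln f))=0$. This yields the biharmonic half of the statement immediately.

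The step I expect to demand the most care is the qualifier \emph{proper}, which must be tracked through the tension fields $\tau(\overline{\Psi})=n({grad}\ln b,0)+m(0,d\varphi({grad}\ln f))$ and $\tau(\bar{\pi})=n({grad}\ln b)$. In the direction ``$\Lambda=0$ and $\bar{\pi}$ proper biharmonic $\Rightarrow$ $\overline{\Psi}$ proper biharmonic'' there is nothing subtle: $\bar{\pi}$ being non-harmonic means ${grad}\ln b\neq 0$, which already makes the $B$-slot of $\tau(\overline{\Psi})$ non-zero, so $\overline{\Psi}$ is non-harmonic, and it is biharmonic by the decomposition. For the converse, $\tau_{2}(\overline{\Psi})=0$ returns both $\Lambda(d\varphi({grad}\ln f))=0$ and the biharmonicity of $\bar{\pi}$ at once; what remains is to certify that $\bar{\pi}$ is genuinely non-harmonic, i.e. that $b$ is non-constant. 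This is exactly where the standing hypothesis of non-constant warping functions is used: it guarantees ${grad}\ln b\neq 0$, so $\bar{\pi}$ cannot be harmonic and is therefore proper biharmonic, closing the equivalence.
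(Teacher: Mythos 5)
Your decomposition argument is exactly the paper's implicit proof: the $B$-slot of $\tau _{2}(\overline{\Psi })$ in (5.6) coincides, after distributing the factor $n$, with $\tau _{2}(\bar{\pi})$ in (5.7); the $F$-slot collapses, via $trace_{g_{f}}\nabla ^{2}v=-\Delta v$ and the definition of the Jacobi operator, to $-\frac{m}{b^{2}}J_{\varphi }(v)+\frac{m^{2}}{2}\mathrm{grad}(|v|^{2})=\Lambda (v)$ as in (5.9); and the orthogonality of the two slots of the product target gives the equivalence of the biharmonic equations. This is precisely how the paper passes from (5.7)--(5.9) to the corollary, so on the level of biharmonicity your argument is correct and identical in route.

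The step that does not go through as written is your treatment of the word \emph{proper} in the converse direction. There is no standing hypothesis in Section 5 that the warping functions are non-constant: Theorem 5.1 assumes only that $b$ and $f$ are smooth positive functions, and non-constancy appears in its \emph{conclusion}, not among its hypotheses. From ``$\overline{\Psi }$ is proper biharmonic'' you therefore only obtain $\tau (\overline{\Psi })=n(\mathrm{grad}\ln b,0)+m(0,d\varphi (\mathrm{grad}\ln f))\neq 0$, i.e.\ that at least one of $\mathrm{grad}\ln b$ and $d\varphi (\mathrm{grad}\ln f)$ is non-zero; this does not force $\mathrm{grad}\ln b\neq 0$. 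Indeed, if $b$ is constant while $d\varphi (\mathrm{grad}\ln f)$ is a non-zero solution of $\Lambda =0$, then $\overline{\Psi }$ is proper biharmonic while $\bar{\pi}$ is harmonic, and the ``only if'' direction fails. This imprecision is inherited from the paper, which asserts the corollary directly from (5.8) without addressing properness; but your proof should not paper over it by appealing to a hypothesis that is not there. Either add the non-constancy of $b$ explicitly, or state the equivalence at the level of the biharmonic equations and record separately that properness of $\overline{\Psi }$ amounts to the non-vanishing of at least one of the two components of its tension field.
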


If $f=1$ or $\varphi :F\rightarrow F$ is a non-zero constant map the
biharmonicity of the product map $\overline{\Psi }=\overline{I_{B}\times
\varphi }:\,_{f}B\times _{b}F\rightarrow B\times F$ \ and the biharmonicity
of the projection $\bar{\pi}:_{f}B\times _{b}F\rightarrow B$ onto the first
factor coincide. In the case of $f=1$, we have the same results obtained in
[4] for warped product manifolds.

\bigskip

Now let us consider the product map $\widetilde{\Psi}=\widetilde{\varphi
\times I_{F}}:\,_{f}B\times _{b}F\rightarrow B\times F$ of the harmonic map $%
\varphi :B\rightarrow B$ and the identitiy map $I_{F}:F\rightarrow F$.

\begin{theorem}
Let $(B,g_{B})$ and $(F,g_{F})$ be Riemannian manifolds of dimensions $m$
and $n,$ respectively and let $b:B\rightarrow (0,\infty )$and $%
f:F\rightarrow (0,\infty )$ be smooth functions. Suppose that $\varphi
:B\rightarrow B$ is a harmonic map. Then $\widetilde{\Psi }=\widetilde{%
\varphi \times I_{F}}:\,_{f}B\times _{b}F\rightarrow B\times F$ is a proper
biharmonic map if and only if $b$ is a non-constant solution of
\begin{equation}
0=-\frac{1}{f^{2}}J_{\varphi }(d\varphi ({grad}\ln b))+\frac{n}{2}{grad}%
(|d\varphi ({grad}\ln b)|^{2})
\end{equation}%
and $f$ is a non-constant solution of%
\begin{equation}
0=\frac{1}{b^{2}}trace_{g_{f}}\nabla ^{2}{grad}\ln f+\frac{1}{b^{2}}Ricc^{F}(%
{grad}\ln f)+\frac{m}{2}{grad}(|{grad}\ln f|^{2}).
\end{equation}
\end{theorem}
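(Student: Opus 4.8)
The plan is to run the computation from the proof of Theorem 5.1 again, observing at the outset that the present statement is nothing but its mirror image: $\widetilde{\Psi}=\widetilde{\varphi\times I_{F}}$ is obtained from $\overline{\Psi}=\overline{I_{B}\times\varphi}$ by interchanging the two factors together with their warping functions and dimensions, so the argument proceeds exactly as before with the roles of $(B,b,m)$ and $(F,f,n)$ swapped and the harmonic map now sitting on the first factor. Concretely, I take local orthonormal frames $\{B_{j}\}$ on $(B,g_{B})$ and $\{F_{r}\}$ on $(F,g_{F})$, so that $\{\frac{1}{f}(B_{j},0),\frac{1}{b}(0,F_{r})\}$ is orthonormal on $_{f}B\times_{b}F$, and record $d\widetilde{\Psi}(B_{j},0)=(d\varphi(B_{j}),0)$ and $d\widetilde{\Psi}(0,F_{r})=(0,F_{r})$. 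Feeding these into $\tau(\widetilde{\Psi})=trace_{g}\nabla d\widetilde{\Psi}$ via the connection formula (3.3), and using the harmonicity of $\varphi$ to kill the term $\frac{1}{f^{2}}(\tau(\varphi),0)$ produced by the $B$-directions, I obtain
\begin{equation*}
\tau(\widetilde{\Psi})=n(d\varphi({grad}\ln b),0)+m(0,{grad}\ln f),
\end{equation*}
which already shows that $\widetilde{\Psi}$ is harmonic exactly when both $d\varphi({grad}\ln b)$ and ${grad}\ln f$ vanish, i.e.\ when $b$ and $f$ are constant.

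Next I would assemble the two ingredients of the bitension field (2.5). For the rough Laplacian $-\Delta\tau(\widetilde{\Psi})=trace_{g}\nabla^{2}\tau(\widetilde{\Psi})$, differentiating $\tau(\widetilde{\Psi})$ twice along the frame while keeping track of the connection terms from (3.3) yields the analogue of (5.4): the $B$-directions contribute the warped Laplacian $\frac{n}{f^{2}}trace_{g_{b}}\nabla^{2}(d\varphi({grad}\ln b))$ together with a self-interaction term $n^{2}\nabla_{d\varphi({grad}\ln b)}d\varphi({grad}\ln b)$, and symmetrically the $F$-directions give $\frac{m}{b^{2}}trace_{g_{f}}\nabla^{2}{grad}\ln f$ and $m^{2}\nabla_{{grad}\ln f}{grad}\ln f$. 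Because the codomain $B\times F$ carries the unwarped product metric, its curvature splits as $R=R^{B}\oplus R^{F}$, so tracing $R(d\widetilde{\Psi},\tau(\widetilde{\Psi}))d\widetilde{\Psi}$ against the frame gives the analogue of (5.5): the $B$-block, where $\widetilde{\Psi}$ acts by $\varphi$, contributes $\frac{n}{f^{2}}trace_{g_{b}}R^{B}(d\varphi,d\varphi({grad}\ln b))d\varphi$, while the $F$-block, where $\widetilde{\Psi}$ is the identity, contributes $-\frac{m}{b^{2}}Ricc^{F}({grad}\ln f)$.

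Subtracting the curvature term from the Laplacian, recognizing on the first factor that the combination $trace_{g_{b}}\nabla^{2}-trace_{g_{b}}R^{B}(d\varphi,\cdot)d\varphi$ is precisely $-J_{\varphi}$, and using on the second factor (where ${grad}\ln f$ is a genuine gradient field) the identity $\nabla_{X}X=\frac{1}{2}{grad}(|X|^{2})$, I collect
\begin{eqnarray*}
\tau_{2}(\widetilde{\Psi}) &=& n\big(-\tfrac{1}{f^{2}}J_{\varphi}(d\varphi({grad}\ln b))+\tfrac{n}{2}{grad}(|d\varphi({grad}\ln b)|^{2}),0\big) \\
&& +\,m\big(0,\tfrac{1}{b^{2}}trace_{g_{f}}\nabla^{2}{grad}\ln f+\tfrac{1}{b^{2}}Ricc^{F}({grad}\ln f)+\tfrac{m}{2}{grad}(|{grad}\ln f|^{2})\big).
\end{eqnarray*}
Since the two summands lie in the mutually orthogonal factors $TB$ and $TF$ of $T(B\times F)$, the equation $\tau_{2}(\widetilde{\Psi})=0$ is equivalent to the separate vanishing of each component, which are exactly equations (5.11) and (5.12); and properness (non-harmonicity) forces $b$ and $f$ to be non-constant, completing the characterization.

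I expect the main obstacle to be the bookkeeping in the Laplacian and curvature terms, precisely as in Theorem 5.1. The delicate point is to verify that the weights $\frac{1}{f^{2}}$ and $\frac{1}{b^{2}}$ coming from the orthonormal frame combine so that the first block closes up into the Jacobi operator $J_{\varphi}$ of the harmonic map $\varphi:B\rightarrow B$ while the second block produces exactly $Ricc^{F}({grad}\ln f)$, and that all mixed $B$--$F$ cross terms generated by the warping Christoffel symbols in (3.3) cancel. In particular, rewriting the first-factor self-interaction term $n^{2}\nabla_{d\varphi({grad}\ln b)}d\varphi({grad}\ln b)$ as $\tfrac{n^{2}}{2}{grad}(|d\varphi({grad}\ln b)|^{2})$, exactly as was done for the $F$-factor of $\overline{\Psi}$ in the proof of Theorem 5.1, is the step that must be handled with care; the harmonicity of $\varphi$ and the unwarped product structure of the codomain are what make these cancellations and identifications possible.
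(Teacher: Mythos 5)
Your proposal is correct and follows essentially the same route the paper intends: the paper states this theorem without proof, remarking only that it follows ``by a similar discussion'' mirroring Theorem 5.1, and your computation is exactly that mirror (swap $(B,b,m)\leftrightarrow(F,f,n)$, put the harmonic map on the first factor), reproducing the tension field $n(d\varphi(\mathrm{grad}\ln b),0)+m(0,\mathrm{grad}\ln f)$ and the two decoupled conditions. Your flagged identification $n^{2}\nabla_{d\varphi(\mathrm{grad}\ln b)}d\varphi(\mathrm{grad}\ln b)=\tfrac{n^{2}}{2}\mathrm{grad}(|d\varphi(\mathrm{grad}\ln b)|^{2})$ is precisely the step the paper itself performs for the $F$-factor in (5.4)--(5.6), so you are consistent with the paper's own conventions.
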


\bigskip

By a similar discussion, carried out to set up the relation between the
biharmonicity of product map $\overline{\Psi }=\overline{I_{B}\times \varphi
}:\,_{f}B\times _{b}F\rightarrow B\times F$ \ and the biharmonicity of \ the
projection $\overline{\pi}:_{f}B\times _{b}F\rightarrow B$ onto the first
factor, the bitension field of the projection $\widetilde{\sigma}%
:_{f}B\times _{b}F\rightarrow F$ onto the second factor is related to the
bitension field of $\widetilde{\Psi }=\widetilde{\varphi \times I_{F}}%
:\,_{f}B\times _{b}F\rightarrow B\times F$ as follows:

By computing the second fundamental form of the projection $\widetilde{%
\sigma },$\ we get $\tau (\widetilde{\sigma })=m({grad}\ln f)\circ
\widetilde{\sigma },$ and the bitension field of $\widetilde{\sigma }$ is%
\begin{eqnarray}
\tau _{2}(\widetilde{\sigma }) &=&\frac{m}{b^{2}}trace_{g_{f}}\nabla ^{2}{%
grad}\ln f  \notag \\
&&+\frac{m}{b^{2}}Ricc^{F}({grad}\ln f)+\frac{m^{2}}{2}{grad}(|{grad}\ln
f|^{2}).
\end{eqnarray}%
Now by using the bitension field of the projection $\widetilde{\sigma }$,
the biharmonic equation of the product map $\widetilde{\Psi }=\widetilde{%
\varphi \times I_{F}}:\,_{f}B\times _{b}F\rightarrow B\times F$ is
\begin{equation}
\tau _{2}(\widetilde{\Psi })=(\Omega (d\varphi ({grad}\ln b),\tau _{2}(%
\widetilde{\sigma }))=0,
\end{equation}%
where
\begin{equation}
\ \Omega (d\varphi ({grad}\ln b))=-\frac{n}{f^{2}}J_{\varphi }(d\varphi ({%
grad}\ln b))+\frac{n^{2}}{2}{grad}(|d\varphi ({grad}\ln b)|^{2}).
\end{equation}

\begin{corollary}
The product map $\widetilde{\Psi }=\widetilde{\varphi \times I_{F}}%
:\,_{f}B\times _{b}F\rightarrow B\times F$ is a proper biharmonic map if and
only if $\ \Omega (d\varphi ({grad}\ln b))=0$ and the projection $\widetilde{%
\sigma}$ is a proper biharmonic map.
\end{corollary}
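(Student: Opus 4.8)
The plan is to read the equivalence straight off the componentwise decomposition of the bitension field recorded just above in (5.13). That identity displays $\tau_2(\widetilde{\Psi})$ as an ordered pair whose $B$-valued component is $\Omega(d\varphi({grad}\ln b))$ and whose $F$-valued component is $\tau_2(\widetilde{\sigma})$. Since the codomain $B\times F$ carries the undeformed product metric, at each point its tangent space is the orthogonal direct sum of the two factors, so $\tau_2(\widetilde{\Psi})=0$ holds if and only if both components vanish separately. Thus $\widetilde{\Psi}$ is biharmonic precisely when $\Omega(d\varphi({grad}\ln b))=0$ and $\tau_2(\widetilde{\sigma})=0$, the second equation being by definition the biharmonicity of the projection $\widetilde{\sigma}$. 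I would also note that, up to the positive constants $n$ and $m$, these two equations are exactly (5.10) and (5.11) of Theorem 5.2 (compare (5.14) and (5.12)), so the biharmonic part of the statement is a mere repackaging.

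First I would set up the tension-field bookkeeping needed for the word ``proper''. As recorded before the corollary, $\tau(\widetilde{\sigma})=m({grad}\ln f)\circ\widetilde{\sigma}$, so $\widetilde{\sigma}$ is harmonic exactly when $f$ is constant. In parallel with (5.3), the tension field of the product map is $\tau(\widetilde{\Psi})=n(d\varphi({grad}\ln b),0)+m(0,{grad}\ln f)$, whose two summands again lie in orthogonal factors; hence $\widetilde{\Psi}$ is harmonic exactly when $d\varphi({grad}\ln b)=0$ and $f$ is constant. For the direction $(\Leftarrow)$ I would assume $\Omega(d\varphi({grad}\ln b))=0$ and $\widetilde{\sigma}$ proper biharmonic. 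Then $\tau_2(\widetilde{\sigma})=0$, so (5.13) gives $\tau_2(\widetilde{\Psi})=0$, while $\widetilde{\sigma}$ non-harmonic forces $f$ non-constant, so the $F$-summand of $\tau(\widetilde{\Psi})$ is nonzero and $\widetilde{\Psi}$ fails to be harmonic; hence $\widetilde{\Psi}$ is proper biharmonic.

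For the direction $(\Rightarrow)$, biharmonicity of $\widetilde{\Psi}$ already yields $\Omega(d\varphi({grad}\ln b))=0$ and $\tau_2(\widetilde{\sigma})=0$, that is $\widetilde{\sigma}$ biharmonic, by the orthogonal splitting. The only nontrivial point, and the step I expect to be the main obstacle, is to promote this to proper biharmonicity of $\widetilde{\sigma}$, i.e. to rule out $f$ constant; this does not follow from the splitting alone, since with $f$ constant the $F$-component vanishes automatically and the non-harmonicity of $\widetilde{\Psi}$ could in principle be carried entirely by its $B$-summand $n(d\varphi({grad}\ln b),0)$. Here I would invoke Theorem 5.2: a proper biharmonic $\widetilde{\Psi}$ forces $f$ to be a non-constant solution of (5.11), whence $\tau(\widetilde{\sigma})=m({grad}\ln f)\circ\widetilde{\sigma}\neq 0$ and $\widetilde{\sigma}$ is non-harmonic, hence proper biharmonic. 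Combining the two directions completes the argument; beyond the orthogonal decomposition everything reduces to matching the non-constancy conditions through the explicit tension fields, so no further computation is required.
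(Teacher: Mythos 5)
Your proposal is correct and follows essentially the same route as the paper, which states this corollary without further argument as an immediate consequence of the componentwise decomposition $\tau _{2}(\widetilde{\Psi })=(\Omega (d\varphi ({grad}\ln b)),\tau _{2}(\widetilde{\sigma }))$ in (5.13) together with the orthogonal splitting of $T(B\times F)$. You go a step further than the paper by explicitly tracking the harmonicity bookkeeping needed for the word ``proper'' and by isolating the one genuinely delicate point --- ruling out, in the forward direction, the case where $f$ is constant but $d\varphi ({grad}\ln b)\neq 0$ --- which you close by appealing to the non-constancy clause of Theorem 5.2; this is a real subtlety that the paper passes over in silence, and your resolution is the natural one within the paper's framework.
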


Note that especially if $\varphi :B\rightarrow B$ is the identity map then
the bitension field of of the product map $\widetilde{\Psi }=\widetilde{%
\varphi \times I_{F}}:\,_{f}B\times _{b}F\rightarrow B\times F$ has the
expression $\tau _{2}(\widetilde{\Psi })=(\tau _{2}(\overline{\pi }),\tau
_{2}(\widetilde{\sigma })).$

\bigskip

Consider now the case of the product map $\widehat{\Psi }=\widehat{%
I_{B}\times \varphi }:B\times F\,\rightarrow _{f}B\times _{b}F,$ that is the
case of the product metric on the codomain is doubly warped. We will see
that the energy density of the harmonic map $\varphi :F\rightarrow F$ has an
important role for the biharmonicity of the product map $\widehat{\Psi }=%
\widehat{I_{B}\times \varphi }.$ We have

\begin{theorem}
Let $(B,g_{B})$ and $(F,g_{F})$ be Riemannian manifolds of dimensions $m$
and $n,$ respectively and let $b:B\rightarrow (0,\infty )$ and $%
f:F\rightarrow (0,\infty )$ be smooth functions. Suppose $I_{B}:B\rightarrow
B$ is the identity map and $\varphi :F\rightarrow F$ is a harmonic map. Then
the product map $\widehat{\Psi }=\widehat{I_{B}\times \varphi }:B\times
F\,\rightarrow _{f}B\times _{b}F$ is a proper biharmonic map if and only if%
\begin{eqnarray}
0 &=&e(\varphi )\{-trace_{g_{b}}\nabla ^{2}(\func{grad}b^{2})-Ricc^{B}(\func{%
grad}b^{2})+\frac{e(\varphi )}{2}\func{grad}(|\func{grad}b^{2}|^{2})\}
\notag \\
&&+\frac{m}{4}\{\frac{e(\varphi )}{f^{2}}-\frac{m}{2b^{2}}\}|\func{grad}%
f^{2}|^{2}\func{grad}b^{2}  \notag \\
&&+\frac{m}{4b^{2}}\{d\varphi (\Delta (f^{2}))\}\func{grad}b^{2}  \notag \\
&&+\frac{e(\varphi )}{2}d\varphi (\Delta (\ln f))\func{grad}b^{2}
\end{eqnarray}
and
\begin{eqnarray}
0&=&\frac{m}{2}\{-J_{\varphi }(\func{grad}f^{2})+\frac{m}{4}\func{grad}(|%
\func{grad}f^{2}|^{2})\}  \notag \\
&&-\frac{e(\varphi )}{2}\{\frac{e(\varphi )}{f^{2}}-\frac{m}{2b^{2}}\}|\func{%
grad}b^{2}|^{2}\func{grad}f^{2}  \notag \\
&&-\frac{|\func{grad}b^{2}|^{2}}{2b^{2}}d\varphi (\func{grad}e(\varphi ))+%
\frac{e(\varphi )}{2f^{2}}\Delta (b^{2})\func{grad}f^{2}  \notag \\
&&+\frac{m}{4}\Delta (\ln b)\func{grad}f^{2}.
\end{eqnarray}
\end{theorem}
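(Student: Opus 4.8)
The plan is to follow the same three-step template used for Theorems 5.1 and 5.2, the essential difference being that the doubly warped structure now sits on the \emph{codomain}, so the codomain connection $\overline{\nabla}$ of (3.3) and, crucially, the full codomain curvature $\overline{R}$ of (3.4) enter the computation rather than the plain product curvature. First I would fix local orthonormal frames $\{B_{j}\}$ on $(B,g_{B})$ and $\{F_{r}\}$ on $(F,g_{F})$; since the domain $B\times F$ now carries the \emph{product} metric, the frame $\{(B_{j},0),(0,F_{r})\}$ is already orthonormal, with no $\tfrac1f,\tfrac1b$ rescaling, which streamlines the traces compared with the earlier theorems. I would then compute $\tau(\widehat{\Psi})=\mathrm{trace}\,\nabla d\widehat{\Psi}$ by splitting the trace into $B$-directions and $F$-directions. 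On the $B$-directions $d\widehat{\Psi}(B_{j},0)=(B_{j},0)$, reproducing the inclusion computation of Theorem 4.1 and contributing a term in $\mathrm{grad}\,f^{2}$; on the $F$-directions $d\widehat{\Psi}(0,F_{r})=(0,d\varphi(F_{r}))$, where harmonicity of $\varphi$ kills the intrinsic tension while the warping correction $-\tfrac12 g_{F}(d\varphi(F_{r}),d\varphi(F_{r}))(\mathrm{grad}\,b^{2},0)$ of (3.3), summed over $r$, contracts to the energy density and yields $-e(\varphi)(\mathrm{grad}\,b^{2},0)$. This is exactly why $e(\varphi)$ is forced to appear throughout the two final equations.

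With $\tau(\widehat{\Psi})$ in hand -- a vector whose $B$-component carries $e(\varphi)\,\mathrm{grad}\,b^{2}$ and whose $F$-component carries $\mathrm{grad}\,f^{2}$ -- the second step is the rough Laplacian $-\Delta\tau(\widehat{\Psi})=\mathrm{trace}\,(\nabla^{\widehat{\Psi}})^{2}\tau(\widehat{\Psi})$ from (2.6), where the frame derivatives are taken with the \emph{domain} product connection but the pull-back connection is induced by the codomain $\overline{\nabla}$. I would differentiate each component twice using (3.3); the terms in which Laplacians and gradients of the warping functions and of $e(\varphi)$ are transported through $d\varphi$ (for instance the $d\varphi(\mathrm{grad}\,e(\varphi))$ term) arise precisely here, since second covariant derivatives in the $F$-directions must be commuted through $d\varphi$ and the energy density is itself differentiated. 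As in Theorems 4.1 and 4.2 I would pass to normal coordinates to rewrite $\sum_{j}B_{j}(\tfrac{1}{2b^{2}}B_{j}(b^{2}))$ and its $F$-analogue as $\Delta(\ln b)$ and $\Delta(\ln f)$.

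The third and hardest step is the curvature term $\mathrm{trace}\,\overline{R}(d\widehat{\Psi},\tau(\widehat{\Psi}))d\widehat{\Psi}$, evaluated with the complete doubly warped curvature formula (3.4). Unlike Theorems 5.1 and 5.2, whose codomain was the bare product so that only $R=R^{B}\oplus R^{F}$ appeared, here every wedge-product block of (3.4) must be contracted against the orthonormal frame and paired with \emph{both} components of $\tau(\widehat{\Psi})$. The intrinsic $B$-curvature produces the $Ricc^{B}(\mathrm{grad}\,b^{2})$ contribution (just as for the projection $\bar{\pi}$), the intrinsic $F$-curvature assembles with $-\Delta$ into the Jacobi operator $J_{\varphi}$ seen in the second equation, and the mixed $B$-$F$ blocks of (3.4), weighted by $e(\varphi)$, generate the coupling coefficients $\tfrac{m}{4}\{\tfrac{e(\varphi)}{f^{2}}-\tfrac{m}{2b^{2}}\}|\mathrm{grad}\,f^{2}|^{2}\,\mathrm{grad}\,b^{2}$ together with its mirror image. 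Correctly separating the many $|\mathrm{grad}\,b^{2}|^{2}$, $|\mathrm{grad}\,f^{2}|^{2}$ and mixed factors into those landing in the $B$-slot versus the $F$-slot is the principal bookkeeping obstacle of the whole proof.

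Finally I would assemble $\tau_{2}(\widehat{\Psi})=-\Delta\tau(\widehat{\Psi})-\mathrm{trace}\,\overline{R}(d\widehat{\Psi},\tau(\widehat{\Psi}))d\widehat{\Psi}$ and use that the codomain splits as a product $B\times F$, so that $\tau_{2}(\widehat{\Psi})$ decomposes canonically into a $B$-component and an $F$-component. Vanishing of the $B$-component gives the first stated equation and vanishing of the $F$-component gives the second. For properness I would return to Step 1: $\widehat{\Psi}$ is non-harmonic exactly when $\tau(\widehat{\Psi})\neq 0$, which in view of its explicit form means that $b$ and $f$ cannot both be constant, matching the hypothesis that $b$ and $f$ be non-constant solutions. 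I expect the contraction of (3.4) in the third step to be the real difficulty; the remainder is systematic differentiation followed by the product splitting of the codomain.
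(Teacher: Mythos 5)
Your plan is essentially the right one, and in fact the paper itself states this theorem without proof (it is the last result before the bibliography), so the only fair comparison is with the method used for Theorems 4.1, 4.2 and 5.1 -- which is exactly the template you follow. You correctly identify the two features that distinguish this case: the domain carries the plain product metric, so the frame $\{(B_{j},0),(0,F_{r})\}$ needs no $\tfrac1f,\tfrac1b$ rescaling, and the warping corrections of (3.3) applied to $d\widehat{\Psi}(0,F_{r})=(0,d\varphi(F_{r}))$ contract to $\sum_{r}g_{F}(d\varphi(F_{r}),d\varphi(F_{r}))=2e(\varphi)$, giving $\tau(\widehat{\Psi})=-e(\varphi)(\operatorname{grad}b^{2},0)-\tfrac{m}{2}(0,\operatorname{grad}f^{2})$; this is precisely what forces $e(\varphi)$ into (5.14)--(5.15) and is consistent with the leading terms $e(\varphi)\{-\operatorname{trace}\nabla^{2}\operatorname{grad}b^{2}-Ricc^{B}(\operatorname{grad}b^{2})+\cdots\}$ and $\tfrac{m}{2}\{-J_{\varphi}(\operatorname{grad}f^{2})+\cdots\}$. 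You also rightly flag that the curvature trace must now use the doubly warped $\overline{R}$ of (3.4) rather than the product curvature. The one caveat is that your third step is a plan rather than a proof: you do not actually carry out the contraction of (3.4), so the specific coupling coefficients such as $\tfrac{m}{4}\{\tfrac{e(\varphi)}{f^{2}}-\tfrac{m}{2b^{2}}\}$ and the $d\varphi(\operatorname{grad}e(\varphi))$ term are asserted to ``come out'' rather than derived; to make this a complete proof you would need to perform that bookkeeping explicitly, as the paper does in the proofs of Theorems 4.1 and 4.2.
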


\end{document}